\def\R{{\mathbb {R}}}
\def\N{{\mathbb {N}}}
\def\A{{\mathcal{A}}}
\def\E{{\mathcal{E}}}
\def\K{{\mathcal{K}}}
\def\F{{\mathcal{F}}}
\def\O{{\Omega}}
\def\lam{\lambda}
\def\vp{\varphi}
\def\ve{\varepsilon}
\def\cp{\operatorname {\text{cap}}}
\newtheorem{teo}{Theorem}[section]
\newtheorem{lema}[teo]{Lemma}
\newtheorem{prop}[teo]{Proposition}
\theoremstyle{remark}
\newtheorem{remark}[teo]{Remark}
\theoremstyle{definition}
\newtheorem{defi}[teo]{Definition}
\numberwithin{equation}{section}
\begin{document}
	
\title{Shape optimization problems for nonlocal operators}
\author[J. Fern\'andez Bonder, A. Ritorto and A.M. Salort]{Juli\'an Fern\'andez Bonder, Antonella Ritorto and Ariel Martin Salort}

\address{Departamento de Matem\'atica, FCEN -- Universidad de Buenos Aires and IMAS -- CONICET, Buenos Aires, Argentina}
\email[J. Fern\'andez Bonder]{jfbonder@dm.uba.ar}
\urladdr[J. Fern\'andez Bonder]{http://mate.dm.uba.ar/~jfbonder}
	
\email[A. Ritorto]{aritorto@dm.uba.ar}
	
\email[A.M. Salort]{asalort@dm.uba.ar}
\urladdr[A.M. Salort]{http://mate.dm.uba.ar/~asalort}
	
\subjclass[2010]{35R11, 49Q10}
	
\keywords{Fractional partial differential equations, shape optimization}

\begin{abstract}
In this work we study a general shape optimization problem where the state equation is given in terms of a nonlocal operator. Examples of the problems considered are monotone combinations of fractional eigenvalues. Moreover, we also analyze the transition from nonlocal to local state equations.
\end{abstract}
	
\maketitle


\section{Introduction}

In this article, we consider shape optimization problems that in the most general form can be stated as follows: Given a {\em cost functional} $F$, and a class of {\em admissible domains} $\A$, solve the minimization problem
\begin{equation}\label{min}
\min_{A\in\A} F(A).
\end{equation}

These types of problems have been extensively considered, and they arise in many fields and in many applications. The literature is very wide, from the classical cases of isoperimetrical problems to the most recent applications including elasticity and spectral optimization. Only to mention some references, we refer   the reader to the books of Allaire \cite{Allaire-book}, Bucur and Buttazzo \cite{Bucur-Buttazzo}, Henrot \cite{Henrot-book}, Pironneau \cite{Pironneau-book} and Soko{\l}owski and  Zol{\'e}sio \cite{Sokolowski-Solesio-book}, where a huge amount of shape optimization problems are tackled.

In most of the existing references, the cost functional $F$ is given in terms of a function $u_A$ which is the solution of a {\em state equation} to be solved on $A$ of the form
$$
F(A) = \int_A j(\nabla u_A, u_A, x)\, dx.
$$
Typically, this state equation is an elliptic PDE. 

In recent years there has been an increasing amount of interest in nonlocal problems due to several interesting applications that include some physical models \cite{DGLZ, Eringen, Giacomin-Lebowitz, Laskin, Metzler-Klafter, Zhou-Du}, finance \cite{Akgiray-Booth, Levendorski, Schoutens}, fluid dynamics \cite{Constantin}, ecology \cite{Humphries, Massaccesi-Valdinoci, Reynolds-Rhodes} and image processing \cite{Gilboa-Osher}.

However, there are only a handful of results of shape optimization problems of the form \eqref{min} where the state equation involves a nonlocal operator instead of an elliptic PDE.

For instance, in \cite{Sire-Vazquez-Volzone}, the authors extend the well-known Faber-Krahn inequality to the fractional case and as a simple corollary, they solve problem \eqref{min} in the case where $F(A)=\lambda_1^s(A)$ where $\lambda_1^s(A)$ is the first eigenvalue of the Dirichlet fractional laplacian and the class $\A$ is the class of open sets of fixed measure. (See next section for precise definitions).

In \cite{Brasco-Parini} the authors consider again the class $\A$ of open sets of fixed measure and $F(A)=\lambda_2^s(A)$ and prove that problem \eqref{min} does not have a solution. In fact, a minimization sequence of domains consists of a sequence of balls of the same measure where the distance of the centers diverges.

Finally, in \cite{Bonder-Spedaletti}, the authors take the class $\A$ of measurable sets of fixed measure contained in a fixed open set $\Omega$ and the cost functional $F(A) = \lambda_1^s(\Omega\setminus A)$ where in this case, $\lambda_1^s(\Omega\setminus A)$ is the first eigenvalue of the fractional laplacian with Dirichlet condition on $A$ and Neumann condition in $\R^n\setminus\Omega$.

For other recent shape optimization problems where the state equation is nonlocal, see  \cite{Burchard-Choksi-Topaloglu, Dalibard-Gerard, Knupfer-Muratov, Knupfer-Muratov2, Qiu-Chong-Zhou}, and references therein.

\medskip

The purpose of this article is to consider the general minimization problem \eqref{min} for general costs funcions $F$ under some natural assumptions that includes the particular cases mentioned above. These natural assumptions are similar to those considered in \cite{Buttazzo-DalMaso} where the authors addressed this problem when the state equation is given in terms of an elliptic PDE. Roughly speaking, these assumptions are: 
\begin{itemize}
\item monotonicity with respect to the inclusion and
\item lower semicontinuity with respect to a suitable defined notion of convergence of domains.
\end{itemize}

Observe that the results of \cite{Brasco-Parini} put a restriction on the classes of admissible domains that one needs to consider if you want to obtain a positive result. So, in the spirit of \cite{Buttazzo-DalMaso} we restrict ourselves to the class $\A$ of open sets of fixed measure that are contained in a fixed box $Q\subset \R^n$.

Under these conditions, we are able to recover the results of \cite{Buttazzo-DalMaso} in the fractional setting and, moreover, we analyze the transition from the fractional case to the classical elliptic PDE case proving convergence of the minima and of the optimal shapes.

\section{Setting of the problem}

\subsection{Some preliminaries and notation}
Given $s\in(0,1)$ we consider the fractional laplacian, that for smooth functions $u$ is defined as
\begin{align*}
(-\Delta)^s u(x) &:= c(n,s)\mbox{p.v.}\int_{\R^n} \frac{u(x)-u(y)}{|x-y|^{n+2s}} \, dy\\
&=-\frac{c(n,s)}{2}\int_{\R^n} \frac{u(x+z) - 2u(x) + u(x-z)}{|z|^{n+2s}}\, dz. 
\end{align*}
where $c(n,s):= ( \int_{\R^n} \frac{1-\cos\zeta_1}{|\zeta|^{n+2s}}d\zeta )^{-1}$ is a normalization constant. 

The constant $c(n,s)$ is chosen in such a way that the following identity holds,
$$
(-\Delta)^s u = \F^{-1}(|\xi|^{2s}\F(u)),
$$
for $u$ in the Schwarz class of rapidly decreasing and infinitely differentiable functions, where $\F$ denotes the Fourier transform. See \cite[Proposition 3.3]{DiNezza-Palatucci-Valdinoci}.

The natural functional setting for this operator is the fractional Sobolev space $H^s(\R^n)$ defined as
\begin{align*}
H^s(\R^n)&:=\left\{u\in L^2(\R^n) \colon \frac{u(x)-u(y)}{|x-y|^{\frac{n}{2}+s}}\in L^2(\R^n \times \R^n) \right\}\\
&= \left\{ u\in L^2(\R^n)\colon |\xi|^2\F(u)\in L^2(\R^n)\right\}
\end{align*}
which is a Banach space endowed with the norm $\|u\|^2_s:= \|u\|_2^2 + [u]^2_s $,
where the term
$$
	[u]^2_s:=\iint_{\R^n \times \R^n} {\frac{|u(x)-u(y)|^2}{|x-y|^{n+2s}} \, dxdy} 
$$
is the so-called Gagliardo semi-norm of $u$.

Due to the nonlocal nature of the operator $(-\Delta)^s$, when dealing with Dirichlet-type problems in a open bounded set $\Omega\subset \R^n$, it is necessary to contemplate the ``boundary condition" not only on $\partial\Omega$ but in the whole $\R^n\setminus \Omega$. The natural space to work with is denoted as $H^s_0(\Omega)$ and it is defined by the closure of $C_c^\infty(\Omega)$ in the norm $\| \cdot \|_s$. When $\Omega$ is a Lipschitz domain, $H^s_0(\Omega)$ coincides with the space of functions vanishing outside $\Omega$, i.e., 
$$
	H^s_0(\Omega)=\{u\in H^s(\R^n)\colon u=0 \mbox{ in } \R^n\setminus \Omega\}.
$$

Aimed at our purposes in this paper, it is suitable to analyze the the behavior of the normalization constant $c(n,s)$ as $s\uparrow 1$. In \cite{Stein}, E. Stein  studied the relation between negative powers of the Laplace operator and Riesz potentials. In this context  it is proved that 
\begin{equation} \label{asinto}
\lim_{s\uparrow 1} \frac{c(n,s)}{1-s}=\frac{4n}{\omega_{n-1}},
\end{equation}
where $\omega_{n-1}$ denotes the $n-1$-dimensional measure of the unit sphere $S^{n-1}$.
That election of the constant is consistent in order to recover the usual laplacian  in the sense that
\begin{equation} \label{conv.lap}
	\lim_{s\uparrow 1} (-\Delta  )^s	u =-\Delta u \quad \forall u\in C_c^\infty(\R^n).
\end{equation}
For a direct proof of these facts, we refer to the article \cite{DiNezza-Palatucci-Valdinoci}.

Moreover, in \cite[Remark 4.3]{DiNezza-Palatucci-Valdinoci} is shown that
\begin{equation} \label{bbm2}
	\lim_{s\uparrow 1} \frac{c(n,s)}{2}[u]_s^2 = \|\nabla u\|_2^2.
\end{equation}

\subsection{Statements of the main results}

We begin with some definitions.

\begin{defi}
Let $\Omega\subset \R^n$ be an open set. Given $A\subset \Omega$, for any $0<s<1$, we define the Gagliardo $s-$capacity of $A$ relative to $\Omega$ as
$$
\cp_s(A,\O)= \inf \left\{ [u]^2_s \colon u\in C^\infty_c(\Omega),\ u\ge 0, \ A\subset \{u\geq 1\}^{\circ} \right\}.
$$

In this context, we say that a subset $A$ of $\O$ is a {\em $s$-quasi open} set if there exists a decreasing sequence $\{\omega_k\}_{k\in  \N}$ of open subsets of $\O$ such that $\cp_s(\omega_k,\O) \to 0$, as $k\to+\infty$, and $A\cup \omega_k$ is an open set for all $k\in \N$. 

We denote by $\A_s(\Omega)$ the class of all $s-$quasi open subsets of $\Omega$. 

In the case $s=1$ the definitions are completely analogous with $\|\nabla u\|_2$ instead of $[u]_s^2$.
\end{defi}

\begin{remark}
From H\"older's inequality is easy to see that $\mathcal{A}_s(\Omega) \subset \mathcal{A}_t(\Omega)$ when $0<t<s\le1$.
\end{remark}

For further properties of the $s-$capacity we refer the reader, for instance, to \cite{Shi-Xiao}.

\medskip

Given $A\in \A_s(\Omega)$, we denote by $u^s_A\in H^s_0(A)$ the unique (weak) solution to
\begin{equation}\label{uas}
(-\Delta)^s u^s_A =1 \quad \mbox{ in } A, \qquad u^s_A=0 \quad \mbox{ in }  \R^n \setminus A.
\end{equation}
With this notation, we define the following notion of set convergence.
\begin{defi}
Let $\{A_k\}_{k\in\N}\subset \A_s(\Omega)$ and $A\in \A_s(\Omega)$. We say that $A_k\stackrel{\gamma_s}{\to} A$ if $u_{A_k}^s\to u_A^s$ strongly in $L^2(\Omega)$.
\end{defi}

\begin{remark}
This is the fractional version of the $\gamma-$convergence of sets defined in \cite{Buttazzo-DalMaso}.
\end{remark}

Now, take $0<s<1$ be fixed and let $F_s\colon \A_s(\Omega)\to \R$ be such that
\begin{enumerate}
\item[($H^s_1$)] \label{H1}$F_s$ is lower semicontinuous with respect to the $\gamma_s-$convergence; that is,  
$$
A_k\stackrel{\gamma_s}{\to} A \quad\text{implies}\quad F_s(A)\le \liminf_{k\to\infty} F_s(A_k).
$$

\item[($H^s_2$)]\label{H2} $F_s$ is decreasing with respect to set inclusion; that is  $F_s(A)\geq F_s(B)$ whenever $A\subset B$.
\end{enumerate}

So, the problem that we address in this paper is the following:
\begin{equation}\label{problem}
	\min \{ F_s(A) \colon A\in \A_s(\Omega),\ |A|\le c\},
\end{equation}
where $F_s$ satisfies ($H_1^s$)--($H_2^s$).

Following the same approach and ideas of \cite{Buttazzo-DalMaso}, problem \eqref{problem} can be analyzed and that is the content of our first result. 
\begin{teo}\label{main.s}
Let $0<s<1$ be fixed and $\Omega\subset \R^n$ be open and bounded. Let $F_s:\mathcal{A}_s(\Omega)\to \R$ be such that {\em($H_1^s$)} and {\em ($H_2^s$)} are satisfied.

Then, for every $0< c< |\Omega|$, problem \eqref{problem} has a solution.
\end{teo}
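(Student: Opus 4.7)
The proof proceeds by the direct method of the calculus of variations, adapting to the fractional setting the scheme of Buttazzo--Dal Maso. The canonical ``representative'' of each admissible set $A$ will be the fractional torsion function $u^s_A$ of \eqref{uas}. Fix a minimizing sequence $\{A_k\}\subset \A_s(\Omega)$ with $|A_k|\le c$ and $F_s(A_k)\to \inf$, and set $w_k:=u^s_{A_k}\in H^s_0(\Omega)$. Testing the equation for $w_k$ against $w_k$ itself gives
$$
\tfrac{c(n,s)}{2}[w_k]_s^2=\int_{A_k}w_k\,dx\le |A_k|^{1/2}\|w_k\|_{L^2(\Omega)}\le c^{1/2}\|w_k\|_{L^2(\Omega)},
$$
and the fractional Poincar\'e inequality $\|u\|_{L^2(\Omega)}\le C_\Omega [u]_s$ in $H^s_0(\Omega)$ yields a uniform bound $[w_k]_s\le C$. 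The compact embedding $H^s_0(\Omega)\hookrightarrow L^2(\Omega)$ then furnishes, up to a subsequence, $w_k\rightharpoonup w$ weakly in $H^s_0(\Omega)$, $w_k\to w$ strongly in $L^2(\Omega)$ and a.e.

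The candidate limit set is $A:=\{w>0\}$: since $w\in H^s_0(\Omega)$, such a superlevel set is $s$-quasi open via the Gagliardo capacity $\cp_s$, hence $A\in \A_s(\Omega)$. As $w_k\equiv 0$ off $A_k$ and $w_k>0$ quasi-everywhere on $A_k$ (by the fractional strong maximum principle), almost-everywhere convergence forces $A\subseteq \liminf_k A_k$ up to null sets, so Fatou gives
$$
|A|\le \liminf_{k\to\infty}|A_k|\le c.
$$
The heart of the argument---and its main obstacle---is to prove that $A_k\stackrel{\gamma_s}{\to} A^*$ for some quasi-open $A^*\subseteq A$, i.e.\ that $w=u^s_{A^*}$. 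Passing to the limit in the weak form of \eqref{uas} against arbitrary $\varphi\in H^s_0(\Omega)$ identifies $w$ as the solution of $(-\Delta)^s w=\chi$ for a weak-$*$ limit $\chi\in L^\infty(\Omega)$ of $\mathbf 1_{A_k}$; comparison yields $w\le u^s_A$, but a priori $w$ may only satisfy a relaxed Dirichlet problem associated with a capacitary measure $\mu\ge \infty_{\R^n\setminus A}$. Recovering an actual quasi-open set $A^*$ from $\mu$ requires transferring the capacity-theoretic $\gamma$-compactness machinery of Buttazzo--Dal Maso to the fractional framework, replacing the Newtonian capacity by $\cp_s$ throughout; the monotonicity hypothesis $(H_2^s)$ enters precisely here, to pass from the measure-valued limit back to the set $A^*$.

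Once the $\gamma_s$-limit $A^*\in \A_s(\Omega)$ with $|A^*|\le c$ is produced, the lower semicontinuity hypothesis $(H_1^s)$ concludes the argument:
$$
F_s(A^*)\le \liminf_{k\to\infty}F_s(A_k)=\inf\{F_s(A):A\in \A_s(\Omega),\ |A|\le c\},
$$
so $A^*$ realizes the minimum.
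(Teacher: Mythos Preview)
Your outline contains a genuine gap at exactly the place you yourself flag as the ``heart of the argument.'' After producing $w=\lim_k u^s_{A_k}$ in $L^2(\Omega)$ and setting $A=\{w>0\}$, you need $A_k$ (or an enlargement of it) to $\gamma_s$-converge to an admissible set; you merely assert that this ``requires transferring the capacity-theoretic $\gamma$-compactness machinery of Buttazzo--Dal Maso to the fractional framework,'' and stop. That is not a proof: the $\gamma_s$-closure of $\{A\in\A_s(\Omega):|A|\le c\}$ is in general strictly larger than the class itself (the limit objects are capacitary measures, not sets), and the mechanism by which $(H_2^s)$ lets one recover a set is precisely what has to be written down. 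Also, your claim that the weak limit solves $(-\Delta)^s w=\chi$ for a weak-$*$ limit $\chi$ of $\mathbf 1_{A_k}$ is not correct as stated: $u^s_{A_k}$ satisfies $(-\Delta)^s u=1$ only against test functions in $H^s_0(A_k)$, not in $H^s_0(\Omega)$; all that passes to the limit globally is the inequality $(-\Delta)^s w\le 1$ in $\Omega$.

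The paper sidesteps the whole capacitary-measure issue by changing the unknown from sets to functions. One works on the convex $L^2$-compact set $\K_s=\{w\in H^s_0(\Omega):w\ge 0,\ (-\Delta)^s w\le 1\text{ in }\Omega\}$ (Proposition~\ref{conjuntok}) and defines $G_s$ as the $L^2$-lower semicontinuous envelope of $J_s(w)=\inf\{F_s(A):A\in\A_s^c(\Omega),\ u_A^s\le w\}$. The substantive work is proving $(G_3)$: $G_s(u_A^s)=F_s(A)$ for every admissible $A$ (Proposition~\ref{G3}). Its proof is where $(H_2^s)$ actually enters: given $w_k\to u_A^s$ one picks $A_k$ nearly realizing $J_s(w_k)$, enlarges them to $A_k\cup\{u_A^s>\varepsilon_k\}$, and uses Lemma~\ref{lemaepsilon} together with the squeeze $(u_A^s-\varepsilon)^+\le u^\varepsilon\le u_A^s$ to force $\gamma_s$-convergence of the enlarged sets to $A$; then $(H_1^s)$ and $(H_2^s)$ give $F_s(A)\le\liminf F_s(A_k\cup A^{\varepsilon_k})\le\liminf F_s(A_k)$. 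Once $(G_1)$--$(G_3)$ hold, the direct method on $\K_s$ yields a minimizer $w_0$, and $A_0=\{w_0>0\}$ solves \eqref{problem} because $w_0\le u_{A_0}^s$ and $G_s$ is decreasing. Your proposal never supplies the analogue of Lemma~\ref{lemaepsilon}/Proposition~\ref{G3}, which is the actual content of the theorem.
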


As we mentioned, the proof of Theorem \ref{main.s} follows the ideas developed in \cite{Buttazzo-DalMaso} and that is carried out in Section \ref{sec.teo1}.

Next, we want to analyze the behavior of this minimum problems and its minimizers when $s\uparrow 1$.

In order to perform such analysis we need to assume some asymptotic behavior on the cost functionals $F_s$. In order to do this, we need to define a notion of convergence for sets when $s$ varies.
\begin{defi}
Let $0<s_k\uparrow 1$ and let $A_k\in \A_{s_k}(\Omega)$ and $A\in \A_1(\Omega)$. We say that $A_k\stackrel{\gamma}{\to} A$ if $u_{A_k}^{s_k}\to u_A^1$ strongly in $L^2(\Omega)$.
\end{defi}

\begin{remark}
Observe that the notion of $\gamma-$convergence of sets given in \cite{Buttazzo-DalMaso} is denoted in this paper by $\gamma_1-$convergence. This should not cause any confusion.
\end{remark}

Now we can give the assumptions of the functionals $F_s$:
\begin{enumerate}
\item[($H_1$)] Continuity with respect to $A$; that is, if $A\in \A_1(\Omega)$, then
$$ 
F_1(A) = \lim_{s\uparrow 1} F_s(A).
$$	
\item[($H_2$)] 	Liminf inequality; that is, for every $0<s_k\uparrow 1$ and $A_k\stackrel{\gamma}{\to}A$, then 
$$
F_1(A) \leq \liminf_{k\to\infty} F_{s_k}(A_k),
$$
\end{enumerate}

Under these assumptions, we obtain the following result.
\begin{teo}\label{main}
For any $0<s\le 1$, let $F_s\colon \A_s(\Omega)\to \R$ be such that {\em ($H^s_1$)} and {\em ($H^s_2$)} are satisfied. Assume moreover that {\em ($H_1$)} and {\em ($H_2$)} are satisfied.

Then
$$
\min\left\{ F_1(A)\colon A\in \A_1(\Omega),\ |A|\le c\right\} = \lim_{s\uparrow 1} \min\left\{ F_s(A)\colon A\in \A_s(\Omega),\ |A|\le c\right\} 
$$
and, moreover, if $A_s\in \A_s(\Omega)$ is a minimizer for \eqref{problem}, then there exists a sequence $0<s_k\uparrow 1$, sets $\tilde{A}_{s_k}\supset A_{s_k}$ and a set $A_1\in \A_1(\Omega)$ such that $\tilde{A}_{s_k}\stackrel{\gamma}{\to}A_1$ and $A_1$ is a minimizer for \eqref{problem} with $s=1$.
\end{teo}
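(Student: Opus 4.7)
The plan is to mirror the Buttazzo--DalMaso scheme, folding in the BBM-type asymptotics \eqref{asinto}--\eqref{bbm2}. Write $m_s:=\inf\{F_s(A):A\in\A_s(\Omega),\,|A|\le c\}$ and recall $m_s$ is attained by Theorem \ref{main.s}. First I dispatch the upper bound. Let $A^\ast\in\A_1(\Omega)$ with $|A^\ast|\le c$ be a minimizer of $F_1$; the preceding remark yields $A^\ast\in\A_s(\Omega)$ for every $s<1$, so $A^\ast$ is admissible in \eqref{problem}. Hypothesis $(H_1)$ then gives $F_s(A^\ast)\to F_1(A^\ast)=m_1$, whence $\limsup_{s\uparrow 1} m_s\le m_1$.

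For the matching $\liminf$, pick a minimizer $A_s$ of \eqref{problem} and set $u_s:=u^s_{A_s}$. Testing \eqref{uas} against $u_s$ and using H\"older,
\[
\frac{c(n,s)}{2}[u_s]_s^2=\int_{A_s}u_s\le c^{1/2}\|u_s\|_2.
\]
Together with a fractional Poincar\'e inequality whose constant is uniform for $s$ in a left-neighborhood of $1$---a consequence of \eqref{bbm2}---this yields a uniform $L^2$-bound on $\{u_s\}$ and on $\frac{c(n,s)}{2}[u_s]_s^2$. The Bourgain--Brezis--Mironescu compactness criterion then extracts $s_k\uparrow 1$ and $u\in H^1_0(\Omega)$ such that $u_{s_k}\to u$ strongly in $L^2(\Omega)$.

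What remains is the construction of $A_1\in\A_1(\Omega)$ with $|A_1|\le c$ and enlargements $\tilde A_{s_k}\supset A_{s_k}$ in $\A_{s_k}(\Omega)$ with $\tilde A_{s_k}\stackrel{\gamma}{\to}A_1$, i.e.\ $u^{s_k}_{\tilde A_{s_k}}\to u^1_{A_1}$ in $L^2(\Omega)$. Following the Buttazzo--DalMaso relaxation construction, $A_1$ is built as a $1$-quasi-open refinement of $\{u>0\}$ chosen so that $u=u^1_{A_1}$, and $|A_1|\le\liminf|A_{s_k}|\le c$ follows from Fatou applied to the a.e.\ convergent subsequence. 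The $\tilde A_{s_k}$ are obtained by adjoining to $A_{s_k}$ a set of vanishing $s_k$-capacity, tuned so that $u^{s_k}_{\tilde A_{s_k}}\to u^1_{A_1}$ strongly in $L^2$. By monotonicity $(H^{s_k}_2)$, $F_{s_k}(\tilde A_{s_k})\le F_{s_k}(A_{s_k})=m_{s_k}$, and $(H_2)$ applied along the $\gamma$-converging sequence gives
\[
m_1\le F_1(A_1)\le\liminf_{k\to\infty}F_{s_k}(\tilde A_{s_k})\le\liminf_{k\to\infty} m_{s_k}.
\]
Combined with $\limsup_{s\uparrow 1} m_s\le m_1$, this forces equality throughout and certifies $A_1$ as a minimizer for $s=1$.

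The main obstacle is the construction of $A_1$ and $\tilde A_{s_k}$: the raw set $\{u>0\}$ need not be quasi-open, so a capacitary modification is required to land in $\A_1(\Omega)$, and the enlargements must be chosen carefully so that the variable-$s_k$ state equations \eqref{uas} on $\tilde A_{s_k}$ pass in the limit to the classical state equation on $A_1$. I expect this to hinge on a Choquet-type capacitary argument adapted to $\cp_{s_k}$, together with an asymptotic comparison between $\cp_{s_k}$ and $\cp_1$ as $s_k\uparrow 1$, which is where most of the technical labor of the proof will reside.
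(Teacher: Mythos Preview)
Your overall architecture is right: the $\limsup$ bound via $(H_1)$ and the $\liminf$ bound via compactness plus $(H_2^s)$ plus $(H_2)$ match the paper exactly. The gap is in the construction of $A_1$ and $\tilde A_{s_k}$, where your proposal goes astray in two places.

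First, you do not need (and in general cannot arrange) $u=u^1_{A_1}$. The paper simply takes $A_1:=\{u>0\}$; this is $1$-quasi open because $u\in \K_1\subset H^1_0(\Omega)$ (Lemma~\ref{Ksacotado}), and one only uses $u\le u_{A_1}$, which is the maximality characterization in Lemma~\ref{teouA}.

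Second, your construction of $\tilde A_{s_k}$ by ``adjoining to $A_{s_k}$ a set of vanishing $s_k$-capacity'' cannot work: sets of zero $s_k$-capacity do not change $H^{s_k}_0$, so $u^{s_k}_{\tilde A_{s_k}}=u^{s_k}_{A_{s_k}}$ and nothing has moved. The paper's enlargements are not capacitarily small at all. With $A:=\{u>0\}$ and $A^\ve:=\{u_A>\ve\}$ (superlevel sets of the \emph{limit} torsion function) one sets $\tilde A_k:=A_k\cup A^{\ve_k}$ for a diagonal sequence $\ve_k\downarrow 0$. On the one hand $u^{s_k}_{A^\ve}\le u^{s_k}_{A_k\cup A^\ve}$ by monotonicity, and by Lemma~\ref{Afijo} the left side converges in $L^2$ to $u_{A^\ve}=(u_A-\ve)^+$; on the other hand, any $L^2$-limit $u^\ve$ of $u^{s_k}_{A_k\cup A^\ve}$ satisfies $u^\ve\le u_A$ (Lemma~\ref{lemaepsilonsk}). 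Hence $(u_A-\ve)^+\le u^\ve\le u_A$, and a diagonal extraction gives $u^{s_k}_{\tilde A_k}\to u_A$, i.e.\ $\tilde A_k\stackrel{\gamma}{\to}A$ (this is Proposition~\ref{suplente}). The technical labor therefore lies not in comparing $\cp_{s_k}$ with $\cp_1$ as you anticipate, but in proving Lemma~\ref{lemaepsilonsk}; that in turn rests on a $\Gamma$-convergence argument (Lemma~\ref{ELlemask}) showing that $L^2$-limits of functions in $H^{s_k}_0(A_k)$ with uniformly bounded rescaled Gagliardo energy lie in $H^1_0(\{u>0\})$.
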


The proof of Theorem \ref{main} is carried out in Section \ref{sec.teo2} and also uses ideas developed in \cite{Buttazzo-DalMaso}. However, in this case nontrivial modifications need to be made in order to consider the varying spaces where one is working.

\subsection{Examples}

Let first establish some notations. Given a bounded domain $A\in \A_s(\Omega)$, consider the problem
\begin{equation} \label{eq}
	(-\Delta)^s u= \lambda^s u \quad \textrm{ in } A, \qquad u\in H_0^s(A)
\end{equation}
where $\lam^s\in \R$ is the eigenvalue parameter. It is well-known that there exists a discrete sequence $\{\lam_k^s(A)\}_{k\in\N}$ of positive eigenvalues of \eqref{eq} approaching $+\infty$ whose corresponding eigenfunctions $\{u_k^s\}_{k\in\N}$ form an orthogonal basis in $L^2(A)$. Moreover, the following variational characterization holds for the eigenvalues
\begin{equation} \label{variac}
	\lam_k^s(A)=\min_{u\perp W_{k-1} }\frac{c(n,s)}{2}\frac{[u]^2_s}{\|u\|_2^2},
\end{equation}
where $W_k$ is the space spanned by the first $k$ eigenfunctions $u_1^s,\ldots, u_k^s$.

Functions $F_s$ satisfying hypothesis ($H_1^s$) and ($H_2^s$) include a large family of examples. For instance, if we consider the application $A\mapsto \lam_k^s(A)$. Theorem \ref{main.s} claims that for every $k\in\N$ and $0< c< |\Omega|$, the minimum
$$
		\min\{ \lam_k^s(A)\colon A\in \A_s(\Omega), \, |A|\le c\}
$$
is achieved. More generally, the minimum
$$
		\min\{ \Phi_s(\lam_{k_1}^s(A),\dots,\lam_{k_N}^s(A))\colon A\in \A_s(\Omega), \, |A|\le c\}
$$
is achieved, where $\Phi_s\colon  \R^N\to \bar \R$, $N\in\N$ is increasing in each coordinate and lower semicontinuous.

Moreover, if $\Phi_s(t_1,\dots,t_N)\to \Phi_1(t_1,\dots,t_N)$ for every $(t_1,\dots,t_N)\in\R^N$ and
$$
\Phi_1(t_1,\dots,t_N)\le \liminf_{k\to\infty} \Phi_{s_k}(t_1^k,\dots,t_N^k),
$$
for every $(t_1^k,\dots,t_N^k)\to (t_1,\dots,t_N)$, then Theorem \ref{main} together with the result of \cite{BPS} imply that 
\begin{align*}
\min\{ \Phi_1&(\lam_{k_1}(A),\dots,\lam_{k_N}(A))\colon A\in \A_1(\Omega), \, |A|\le c\}\\
& = \lim_{s\uparrow 1} \min\{ \Phi_s(\lam_{k_1}^s(A),\dots,\lam_{k_N}^s(A))\colon A\in \A_s(\Omega), \, |A|\le c\}.
\end{align*}

\section{Proof of Theorem \ref{main.s}} \label{sec.teo1}

This section is devoted to prove Theorem \ref{main.s}. The arguments follow essentially the lines of \cite{Buttazzo-DalMaso} with some modifications for the nonlocal setting.

The sketch of the argument is as follows: Given $A\in\A_s(\Omega)$, we first prove that $u_A^s$ is the solution to
\begin{equation}\label{ecA}
\max\{w\in H^s_0(\Omega)\colon w\le 0 \text{ in } \R^n\setminus A,\ (-\Delta)^s w\le 1 \text{ in }\Omega\}.
\end{equation}
Moreover, $u_A^s$ belongs to the convex closed set $\mathcal{K}_s$ defined as
\begin{equation} \label{Ks}
\mathcal{K}_s= \{w \in H^s_0(\Omega) \colon w\geq0, (-\Delta)^s w\leq 1 \textrm{ in } \Omega\}.
\end{equation}
It will be convenient to also consider $\K_1$ defined as in \eqref{Ks} with $s=1$ where $(-\Delta)^1 = -\Delta$.

Finally, one defines a functional $G_s$ on $\mathcal{K}_s$ satisfying that
\begin{itemize}
	\item[($G_1$)] $G_s$ is decreasing on $\mathcal{K}_s$,
	\item[($G_2$)] $G_s$ is semicontinuous on $\mathcal{K}_s$ with respect to the strong topology on $L^2(\Omega)$, 
	\item[($G_3$)] $G_s(u_A^s)=F_s(A)$ for every $A\in\A_s(\Omega)$,
\end{itemize}	
to conclude that the   problem
\begin{equation}\label{minG}
	\min\{ G_s(w)\colon w\in\mathcal{K}_s, \, |\{w>0\}|\leq c \}
\end{equation}
has a solution $w_0$. If we denote by $A_0=\{w_0>0\}$, then $u_{A_0}^s$ is also a minimum point of $G_s$ over the whole $\mathcal{K}_s$ subject to the condition $|\{w>0\}|\leq c$ and hence, $A_0$ is a minimizer for $F_s$ in $\A_s(\Omega)$ subject to the condition $|A|\le c$.

\bigskip

We start by proving \eqref{ecA}. Let us define 
$$
K_A=\{w \in H^s_0(\O)\colon w\leq 0 \textrm{ in } \R^n \setminus A\},
$$
and $w_A\in K_A$ the (unique) minimizer of
$$
I_s\colon K_A\to \R,\qquad I_s(w) = \frac{c(n,s)}{2} [w]_s^2 - \int_\O w\, dx.
$$
Observe that, by Stampacchia's Theorem, $w_A$ is characterized by the variational inequality 
\begin{equation} \label{viuA}
	\E(w_A,v-w_A)\geq \int_{\O}{(v-w_A) \, dx} \quad \forall v \in K_A,
\end{equation}
where we denote
\begin{equation}\label{E.def}
\E(u,v) := c(n,s)\iint_{\R^n\times\R^n} \frac{(u(x)-u(y))(v(x)-v(y))}{|x-y|^{n+2s}}\, dxdy.
\end{equation}

Next, we prove that both functions $u_A^s$ and $w_A$ agree.

\begin{lema}	\label{igualdad}
With the previous notation we have that $w_A=u_A^s$.
\end{lema}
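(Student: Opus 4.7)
The plan is to show that $w_A$ itself is a weak solution of the Dirichlet problem \eqref{uas}; by uniqueness of weak solutions, this will force $w_A = u_A^s$. The proof reduces to two ingredients: \emph{(a)} $w_A \ge 0$ a.e., which together with the constraint $w_A \le 0$ in $\R^n\setminus A$ forces $w_A = 0$ outside $A$; and \emph{(b)} $\mathcal{E}(w_A,\varphi) = \int_A \varphi\,dx$ for every $\varphi \in H_0^s(A)$.

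For \emph{(a)}, I would test the variational inequality \eqref{viuA} with $v = w_A^+$. This is an admissible choice because $w_A \le 0$ outside $A$ forces $w_A^+ = 0$ there, hence $w_A^+\in K_A$. Since $v-w_A = w_A^-$, the inequality yields $\mathcal{E}(w_A,w_A^-)\ge \int_\Omega w_A^-\,dx \ge 0$. On the other hand, expanding $w_A = w_A^+ - w_A^-$ inside the Gagliardo kernel and exploiting the pointwise identity $w_A^+\cdot w_A^-\equiv 0$ produces the algebraic bound $\mathcal{E}(w_A,w_A^-)\le -c(n,s)[w_A^-]_s^2$. Comparing the two forces $[w_A^-]_s = 0$, and since the Gagliardo seminorm is a norm on $H_0^s(\Omega)$ (for $\Omega$ bounded), we obtain $w_A^- \equiv 0$. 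Combined with the constraint outside $A$, this gives $w_A = 0$ in $\R^n\setminus A$.

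For \emph{(b)}, I would perturb: given $\varphi\in C_c^\infty(A)$, both $w_A \pm \varphi$ lie in $K_A$ because they agree with $w_A$ outside $A$, where $w_A\le 0$. Plugging each into \eqref{viuA} and combining the two resulting inequalities yields $\mathcal{E}(w_A,\varphi) = \int_A\varphi\,dx$. Density of $C_c^\infty(A)$ in $H_0^s(A)$ extends this to all test functions, so $w_A$ weakly solves \eqref{uas}, and uniqueness of the weak solution then gives $w_A = u_A^s$.

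The crux of the argument is the nonlocal algebraic estimate $\mathcal{E}(w,w^-)\le -c(n,s)[w^-]_s^2$, which replaces the immediate local computation $\int \nabla w\cdot\nabla w^- = -\|\nabla w^-\|_2^2$. Verifying it amounts to a careful expansion of the Gagliardo kernel; once this sign is secured, the remaining steps are routine, the only mild subtlety being that when $A$ is merely $s$-quasi open the space $H_0^s(A)$ and the density statement must be understood in the capacitary sense introduced earlier.
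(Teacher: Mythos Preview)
Your argument is correct. Step~(a) is exactly what the paper does: test \eqref{viuA} with $v=w_A^+$, obtain
\[
0\le \int_\Omega w_A^-\,dx \le \E(w_A,w_A^-)\le -c(n,s)[w_A^-]_s^2,
\]
and conclude $w_A^-=0$, hence $w_A\in H_0^s(A)$.

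Your step~(b) differs from the paper's. You derive the Euler--Lagrange equation by the two-sided perturbation $w_A\pm\varphi\in K_A$ and then invoke uniqueness of weak solutions. The paper instead stays at the level of the functional $I_s$: once $w_A\in H_0^s(A)$ is known, it observes that $u_A^s$ is the unique minimizer of $I_s$ over $H_0^s(A)$, while $w_A$ minimizes $I_s$ over the larger set $K_A\supset H_0^s(A)$; since $u_A^s\in K_A$, this forces $I_s(w_A)=I_s(u_A^s)$ and hence $w_A=u_A^s$. Both routes are standard; the paper's is marginally shorter because it never unpacks the weak formulation, whereas yours has the mild advantage of making the PDE satisfied by $w_A$ explicit.
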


\begin{proof}
The proof is standard. Take $w_A^+$ as test function in the variational inequality \eqref{viuA} and obtain
\begin{align*}
0\le \int_\O w_A^-\, dx &\le \E(w_A, w_A^-)\\
&\le -c(n,s)\iint_{\{w_A\le 0\}\times\{w_A\le 0\}} \frac{(w_A^-(x)-w_A^-(y))^2}{|x-y|^{n+2s}}\, dxdy.
\end{align*}
From this inequality one easily conclude that $w_A^-=0$ and so, since $w_A\in K_A$, $w_A\in H^s_0(A)$.

Therefore, since $u_A^s$ is the unique minimum of $I_s$ over $H^s_0(A)$ and, since also $u_A^s\in K_A$, $I_s(w_A)\le I_s(u_A^s)$ the lemma follows.
\end{proof}

Using the previous lemma we prove the following properties on $u_A^s$.
\begin{lema} \label{teouA} 
	With the above notations, $u_A^s\geq 0$ on $\Omega$. Moreover, $u_A^s$ is the solution to \eqref{ecA}.
\end{lema}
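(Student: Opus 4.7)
The plan is to derive both statements from Lemma \ref{igualdad} and the variational inequality \eqref{viuA}. The nonnegativity of $u_A^s$ is already essentially contained in the proof of Lemma \ref{igualdad}: testing \eqref{viuA} with $v = w_A^+$ forces $w_A^- \equiv 0$, and since $u_A^s = w_A$ by Lemma \ref{igualdad} this yields $u_A^s \geq 0$ at once.

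To establish \eqref{ecA}, I would first verify that $u_A^s$ is itself admissible. The conditions $u_A^s \in H^s_0(\Omega)$ and $u_A^s \leq 0$ on $\R^n\setminus A$ are immediate from $u_A^s \in H^s_0(A)$, so only the inequality $(-\Delta)^s u_A^s \leq 1$ weakly in $\Omega$ needs argument. For this I would take an arbitrary nonnegative $\varphi \in H^s_0(\Omega)$ and plug $v = w_A - \varphi$ into \eqref{viuA}: since $w_A = 0$ outside $A$, $v \leq 0$ there and hence $v \in K_A$. Rearranging the resulting inequality gives $\E(u_A^s, \varphi) \leq \int_\Omega \varphi\,dx$, which is exactly the weak form of $(-\Delta)^s u_A^s \leq 1$.

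It remains to show that any competitor $z$ in the admissible set satisfies $z \leq u_A^s$. The key device is the test function $\varphi := (z - u_A^s)^+$. Because $z \leq 0$ and $u_A^s \geq 0$ on $\R^n\setminus A$, $\varphi$ vanishes outside $A$ and therefore belongs to $H^s_0(A)$. Using $v = w_A + \varphi \in K_A$ in \eqref{viuA} gives $\E(u_A^s, \varphi) \geq \int_\Omega \varphi\,dx$, while the admissibility of $z$ (applied against the nonnegative test function $\varphi \in H^s_0(\Omega)$) gives $\E(z, \varphi) \leq \int_\Omega \varphi\,dx$. Subtracting yields $\E(z - u_A^s, (z-u_A^s)^+) \leq 0$. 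The elementary pointwise inequality $(a-b)(a^+ - b^+) \geq (a^+ - b^+)^2$, verified by a case analysis on the signs of $a$ and $b$, then forces $c(n,s)[(z - u_A^s)^+]_s^2 \leq 0$. Since $(z - u_A^s)^+ \in H^s_0(\Omega)$ has vanishing Gagliardo seminorm, it is zero, so $z \leq u_A^s$ as required.

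The main delicate point I expect is the double role played by $\varphi = (z - u_A^s)^+$: it must simultaneously be an admissible perturbation of $w_A$ inside $K_A$ and an admissible nonnegative test function for the weak inequality on $z$. Both rely on the opposite signs of $z$ and $u_A^s$ on $\R^n\setminus A$, which localizes the cutoff back inside $A$ and places it in $H^s_0(A) \subset H^s_0(\Omega)$.
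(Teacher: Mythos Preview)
Your proof is correct and follows essentially the same route as the paper for admissibility: both extract $(-\Delta)^s u_A^s \le 1$ in $\Omega$ from \eqref{viuA} by testing with a suitable shift by a nonnegative $\varphi\in H^s_0(\Omega)$ (you use $v=w_A-\varphi$, the paper uses $v=-\varphi$ and then cancels the $[u_A^s]_s^2$ term via the equation in $A$; your choice is marginally cleaner).

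The only genuine difference is in the maximality step. The paper simply invokes the comparison principle on $A$: for a competitor $w$ one has $(-\Delta)^s w\le 1=(-\Delta)^s u_A^s$ in $A$ and $w\le 0=u_A^s$ in $\R^n\setminus A$, hence $w\le u_A^s$. You instead reprove this comparison by hand, testing both inequalities against $(z-u_A^s)^+$ and using the pointwise bound $(a-b)(a^+-b^+)\ge (a^+-b^+)^2$ to conclude $[(z-u_A^s)^+]_s=0$. Both arguments are standard; yours is self-contained, while the paper's is shorter. One minor remark: you do not actually need the claim $\varphi\in H^s_0(A)$ (which for a merely $s$-quasi open $A$ would require an extra identification of $H^s_0(A)$ with $\{u\in H^s_0(\Omega):u=0$ off $A\}$); it suffices that $\varphi\in H^s_0(\Omega)$ with $\varphi=0$ on $\R^n\setminus A$, so that $w_A+\varphi\in K_A$, and that is all your argument uses.
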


\begin{proof}
	Given $v\in H^s_0(\O)$ such that $v\geq0$, we have that $-v\in K_A$. Using it as a test function in \eqref{viuA} we obtain that
	\begin{align*}
	\E(u_A^s, -v-u_A^s) =  -c(n,s)[u_A^s]^2_s-\E(u_A^s,v) \geq -\int_{\O}{v \, dx}-\int_{\O}{u_A^s \, dx}.
	\end{align*}
	Using that  $(-\Delta)^s u_A^s=1$ in $A$, the last inequality reads as
	$$
	 \E(u_A^s,v)\leq \int_{\O} v \, dx.
	$$
	Since $v\in H^s_0(\Omega)$ is nonnegative but otherwise arbitrary,  we get that $(-\Delta)^s u_A^s \leq 1$ in $\O$.

Finally, if $w\le 0$ in $\R^n\setminus A$ and $(-\Delta)^s w\le 1$ in $\Omega$, then
$$
(-\Delta)^s w\le (-\Delta)^s u_A^s \text{ in } A \quad \text{ and }\quad w\le u_A^s \text{ in } \R^n\setminus A.
$$
Hence, by comparison, $w\le u_A^s$ in $\R^n$.
\end{proof}

The set $\mathcal{K}_s$ satisfies the following properties:

\begin{prop} \label{conjuntok}
	$\mathcal{K}_s$ is a convex, closed and bounded subset of $H_0^s(\O)$.  
	\end{prop}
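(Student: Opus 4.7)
The plan is to verify each of the three properties in turn, with the bulk of the work lying in the boundedness estimate. Throughout I will interpret the condition $(-\Delta)^s w \le 1$ in $\Omega$ in its weak form, namely
$$
\E(w,\vp) \le \int_{\Omega} \vp\, dx \quad\text{for all } \vp \in H^s_0(\Omega),\ \vp\ge 0,
$$
where $\E$ is the bilinear form defined in \eqref{E.def}.

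For \textbf{convexity}, given $w_1,w_2\in\K_s$ and $t\in[0,1]$, the function $tw_1+(1-t)w_2$ is clearly nonnegative, and by the bilinearity of $\E$ the weak inequality $(-\Delta)^s(tw_1+(1-t)w_2)\le t+(1-t)=1$ follows at once from those for $w_1$ and $w_2$.

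For \textbf{closedness}, let $\{w_k\}\subset\K_s$ converge strongly in $H^s_0(\O)$ to some $w$. Up to a subsequence, $w_k\to w$ a.e., hence $w\ge 0$ a.e. On the other hand, for any test $\vp\in H^s_0(\O)$ with $\vp\ge 0$, the continuity of $\E$ on $H^s_0(\O)\times H^s_0(\O)$ (which follows from Cauchy--Schwarz applied to the integrand defining \eqref{E.def}) yields
$$
\E(w,\vp)=\lim_{k\to\infty}\E(w_k,\vp)\le \int_\O \vp\, dx,
$$
so $w\in\K_s$.

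The main step is \textbf{boundedness}. Given $w\in\K_s$, the key point is that $w$ itself is an admissible nonnegative test function, since $w\in H^s_0(\O)$ and $w\ge 0$. Using $\vp=w$ in the weak inequality gives
$$
c(n,s)[w]_s^2=\E(w,w)\le\int_\O w\, dx\le |\O|^{1/2}\|w\|_2,
$$
where in the last step I apply Cauchy--Schwarz. The fractional Poincar\'e inequality for $H^s_0(\O)$ on the bounded set $\O$, namely $\|w\|_2\le C_{\O,s}[w]_s$, then yields $[w]_s\le C_{\O,s}|\O|^{1/2}/c(n,s)$, which together with Poincar\'e again bounds $\|w\|_s$ by a constant depending only on $\O$, $n$ and $s$. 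This shows $\K_s$ is contained in a ball of $H^s_0(\O)$.

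The only delicate point I foresee is making sure the test-function class is large enough to get a true Hilbert-space bound (rather than merely a semi-norm bound); this is handled by Poincar\'e on the bounded domain $\O$. The remaining arguments are routine linearity/continuity manipulations.
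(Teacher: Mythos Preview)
Your proposal is correct and follows essentially the same route as the paper's own proof: convexity is immediate from linearity, closedness is obtained by passing to the limit in $\E(w_k,\vp)\le\int_\O\vp$ using the continuity of $\E$, and boundedness comes from testing the weak inequality with $w$ itself and combining H\"older (Cauchy--Schwarz) with the fractional Poincar\'e inequality. If anything, you are slightly more explicit than the paper in checking that the limit remains nonnegative in the closedness step.
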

	
\begin{proof}
	Clearly, $\mathcal{K}_s$ is a convex set. $\mathcal{K}_s$ is also bounded. Indeed, given $u\in \mathcal{K}_s$, by H\"older and Poincar\'e's inequalities we get
	\begin{align*}
		c(n,s)[u]_s^2 \leq \int_{\O} u \, dx 
		\leq |\O|^{\frac{1}{2}} \|u\|_{L^2(\O)} \leq C|\O|^{\frac{1}{2}}[u]_s.
	\end{align*} 

	In order to see that $\mathcal{K}_s$ is closed, let $\{u_k\}_{k\in \N}$ be a sequence in $\mathcal{K}_s$ such that $u_k\to u$ in $H_0^s(\O)$. For any $k\in \N$ and any $v\in H_0^s(\O)$, $v\ge 0$, it holds that
	$$
		\E(u_k,v)\leq \int_{\O}{v \, dx}.
	$$
	Taking the limit as $k\to \infty$ we obtain that $\E(u,v)\leq \int_{\O}{v \,  dx}$, but,  since $v\in H^s_0(\Omega)$ is nonnegative but otherwise arbitrary we obtain that  $(-\Delta)^s u \leq 1$ in $\O$ and then $u \in \mathcal{K}_s$.
\end{proof}

\begin{remark}\label{cota.ks}
Observe that optimal constant in Poincar\'e's inequality 
$$
\|u\|_{L^2(\O)}^2\le C(\Omega,s)[u]_{s}^2,
$$
has a dependence on $s$ of the form
$$
C(\Omega,s)\le (1-s) C(\Omega).
$$
See \cite{BPS}.

Therefore, the proof of Proposition \ref{conjuntok} gives that if $u\in \K_s$, then
$$
(1-s)[u]_s^2\le C,
$$
where $C$ depends on $\Omega$ but is independent on $0<s<1$.
\end{remark}

Now, in order to prove the existence of solution to \eqref{problem} we define a functional $G_s$ on $\mathcal{K}_s$ satisfying the conditions $(G_1)$--$(G_3)$.

We will use the notation, for $0<s\le 1$,
\begin{equation}\label{Asc}
\A_s^c(\Omega) := \{ A\in \A_s(\Omega)\colon |A|\le c\}.
\end{equation}

For any $0<s\le 1$, given $w\in \mathcal{K}_s$ we define 
\begin{equation} \label{J}
	J_s(w)=\inf \{F_s(A) \colon A\in \A_s^c(\Omega),\ u_A^s \leq w \}.
\end{equation}
This functional $J_s$ is not lower semicontinuous in general. So we define $G_s$ to be the lower semicontinuos envelope of $J_s$ in $\mathcal{K}_s$ with respect to the strong topology in $L^2(\Omega)$, i.e., 
\begin{equation} \label{G}
G_s(w)=\inf \{\liminf_{k\to\infty} J_s(w_k)\},
\end{equation}
where the infimun is taken over all sequences $\{w_k\}_{k\in \N}$ in $\mathcal{K}_s$ such that $w_k \to w$ in $L^2(\Omega)$.

Observe that $G_s$ automatically verifies ($G_2$).

\begin{prop} Let $0<s\le1$. The functional $G_s$ satisfies conditions $(G_1)$.
\end{prop}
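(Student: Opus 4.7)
The plan is to deduce the monotonicity of the relaxation $G_s$ from the monotonicity of $J_s$ via an approximation argument that requires $\mathcal{K}_s$ to be stable under pointwise maxima.

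First I would check that $J_s$ itself is decreasing on $\mathcal{K}_s$. If $v,w\in \mathcal{K}_s$ with $v\le w$, then any admissible $A$ in the definition of $J_s(v)$ (i.e.\ $A\in \A_s^c(\Omega)$ with $u_A^s\le v$) also satisfies $u_A^s\le w$, so the infimum in \eqref{J} is taken over a larger class and $J_s(w)\le J_s(v)$. This step is immediate from the definition.

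The second step, which is the main obstacle, is to show that $\mathcal{K}_s$ is closed under the pointwise maximum: if $u,v\in \mathcal{K}_s$, then $u\vee v\in \mathcal{K}_s$. Nonnegativity and membership in $H^s_0(\Omega)$ are standard (using that $H^s_0(\Omega)$ is a lattice via $u\vee v = u + (v-u)^+$). The delicate point is the inequality $(-\Delta)^s(u\vee v)\le 1$ in the weak sense in $\Omega$. To prove it I would fix a nonnegative test function $\vp\in H^s_0(\Omega)$ and split $\vp=\vp\chi_{\{u\ge v\}}+\vp\chi_{\{v>u\}}$; testing $(-\Delta)^s u\le 1$ against the first piece (suitably regularised in $H^s_0(\Omega)$) and $(-\Delta)^s v\le 1$ against the second, and then reassembling using the pointwise identity
\[
(u\vee v)(x)-(u\vee v)(y) = \begin{cases} u(x)-u(y) & \text{on } \{u\ge v\}\times\{u\ge v\},\\ v(x)-v(y) & \text{on } \{v>u\}\times\{v>u\},\end{cases}
\]
together with the elementary inequality
\[
\bigl((u\vee v)(x)-(u\vee v)(y)\bigr)\bigl(\vp(x)-\vp(y)\bigr)\le \bigl(u(x)-u(y)\bigr)\bigl(\vp_1(x)-\vp_1(y)\bigr)+\bigl(v(x)-v(y)\bigr)\bigl(\vp_2(x)-\vp_2(y)\bigr)
\]
for an appropriate nonnegative decomposition $\vp=\vp_1+\vp_2$, one arrives at $\E(u\vee v,\vp)\le\int_\Omega \vp\,dx$. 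The standard way to make this rigorous is to observe that $u\vee v$ is the solution of the obstacle problem in $H^s_0(\Omega)$ with obstacle $\max(u,v)$ and right-hand side $1$, which belongs to $\mathcal{K}_s$ by the same Stampacchia argument used in Lemma \ref{igualdad} and Lemma \ref{teouA}.

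Finally, using Step~2, I would lift the monotonicity to $G_s$. Given $v\le w$ in $\mathcal{K}_s$ and any sequence $v_k\to v$ in $L^2(\Omega)$ with $v_k\in \mathcal{K}_s$, set $w_k:=v_k\vee w$. By Step~2, $w_k\in \mathcal{K}_s$; moreover $w_k\ge v_k$ and, since $v\le w$,
\[
\|w_k-w\|_{L^2(\Omega)} = \|(v_k-w)^+\|_{L^2(\Omega)} \le \|v_k-v\|_{L^2(\Omega)}\to 0.
\]
By Step~1, $J_s(w_k)\le J_s(v_k)$, so
\[
G_s(w)\le \liminf_{k\to\infty} J_s(w_k)\le \liminf_{k\to\infty} J_s(v_k).
\]
Taking the infimum over all admissible sequences $\{v_k\}$ in the definition of $G_s(v)$ yields $G_s(w)\le G_s(v)$, which is $(G_1)$. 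The only real work is in Step~2; once the lattice property of $\mathcal{K}_s$ is in hand, Steps~1 and~3 are routine.
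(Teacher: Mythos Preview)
Your proposal is correct and follows essentially the same route as the paper: prove the lattice property of $\mathcal{K}_s$, then lift the (obvious) monotonicity of $J_s$ to its lower semicontinuous envelope $G_s$ via the sequence $w_k=v_k\vee w$. The paper carries out Step~2 exactly by the obstacle-problem argument you call ``the standard way'': it minimizes $\frac{c(n,s)}{2}[z]_s^2-\int_\Omega z$ over $\{z\le u\vee v\}$, shows the minimizer $z_0$ satisfies $(-\Delta)^s z_0\le 1$, and then proves $z_0\ge u$ and $z_0\ge v$ by comparison, forcing $z_0=u\vee v\in\mathcal{K}_s$. Your direct splitting $\vp=\vp_1+\vp_2$ is hard to make rigorous in the nonlocal setting because of the cross terms on $\{u\ge v\}\times\{v>u\}$, so the obstacle argument is indeed the right choice.
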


\begin{proof}
The case $s=1$ is considered in \cite{Buttazzo-DalMaso}, so we take $0<s<1$.

Let us begin by noticing that if $u,v\in \K_s$, then $\max\{u,v\}\in \K_s$. In fact, let us denote $w= \max \{u, v\}$ and consider the convex set 
$$
E=\{z \in H^s_0(\Omega) \colon z \leq w \textrm{ in } \Omega \}.
$$
By Stampacchia's Theorem  there exists a unique function $z_0 \in E$ such that 
$$
J(z_0):=\frac{c(n,s)}{2}[z_0]_s^2 - \int_{\Omega} z_0\, dx = \min_{E}J.
$$
In addition, $z_0$ satisfies that
\begin{equation} \label{zdv}
\mathcal{E}(z_0,z-z_0) \geq \int_{\Omega}{(z-z_0) \, dx} \quad \mbox{ for all }z \in E,
\end{equation}
where $\E$ is defined in \eqref{E.def}.
	
Let us see that $(-\Delta)^s z_0 \leq 1$. Given $\vp \in H^s_0(\Omega)$ such that $\vp \leq 0$, we define the functional 
$$
i(t)=J(z_0 + t\vp), \  \textrm{ for all }t\geq 0.
$$
Observe that $i'(0)\geq0$. In consequence, for any non-positive $\vp \in H^s_0(\Omega)$ it holds that $\mathcal{E}(z_0,\varphi) \geq\int_{\Omega}\vp \,dx$, and then  $\mathcal{E}(z,\varphi) \leq \int_{\Omega}\vp \,dx$ for any $\vp \in H^s_0(\Omega)$, $\vp \geq0$ and the claim follows.

Now, we will prove that $z_0\geq u$ (and for symmetry reasons that $z_0\geq v$),  from where it will follow that $z_0\geq w$. Since $z_0\in E$, the reverse inequality holds and we can conclude that $z_0=w\in \K_s$.
	
Let $\eta= \max \{z_0, u\}$ and let us see that $z_0=\eta$. Observe that $\eta \in E$ and thus it can be consider as a test function in \eqref{zdv}. Thus,
$$
\mathcal{E}(z_0,\eta-z_0) \geq \int_{\Omega} (\eta-z_0) \,dx.
$$
On the other hand, since $\eta-z_0 \geq0$ and $(-\Delta)^s u\le 1$ in $\Omega$, it follows that
$$
\mathcal{E}(u,\eta-z_0) \leq \int_{\Omega}{(\eta-z_0) \, dx}.
$$
From  both inequalities it is straightforward to see that 
$$
0\leq \mathcal{E}(z_0-u,\eta-z_0) \leq -c(n,s) [(u-z_0)^+]_{s}^2
$$
and then $(u-z_0)^+=0$ in $\R^n$, which implies that $z_0\geq u$ in $\R^n$, as we required.

Now we proceed with the proof.

Let $u,v\in\mathcal{K}_s$ be such that $u\leq v$ and let $\{u_k\}_{k\in\N}\subset \K_s$ be such that  $u_k\to u$ in $L^2(\O)$ and $J_s(u_k)\to G_s(u)$.

By our previous claim, $v_k=\max\{ v, u_k\}\in \mathcal{K}_s$ for each  $k\in \N$ and $v_k \to v= \max\{ v, u\}$ in $L^2(\O)$. Consequently, since $J_s$ is non-increasing   and $v_k\geq u_k$ for any $k\in \N$ we get
	$$
	G_s(v)\leq \liminf_{k\to \infty} J_s(v_k) \leq \lim_{k\to \infty}J_s(u_k)=G_s(u),
	$$
as we wanted to show.
\end{proof}

We will need the following lemma in order to prove condition ($G_3$). We omit the proof since it is completely analogous to   that of Lemmas 3.2 and 3.3 in \cite{Buttazzo-DalMaso} where the case $s=1$ was considered.

\begin{lema} \label{lemaepsilon}
Let $0<s<1$. Let $\{A_k\}_{k\in\N}\subset \A_s(\O)$ be such that $u_{A_k}^s\to u$ in $L^2(\O)$, with $u\leq u_A^s$. We define $A^{\ve}=\{ u_A^s>\ve\}$. Then, if $u_{A_k\cup A^{\ve}}^s \to u^{\ve}$ in $L^2(\O)$ it holds that $u^{\ve} \leq u_A^s$.
\end{lema}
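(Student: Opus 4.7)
The strategy is to verify directly that $u^{\ve}$ lies in the class characterized by Lemma \ref{teouA}: I will check that $u^{\ve}\in H^s_0(\O)$ satisfies $(-\Delta)^s u^{\ve}\le 1$ in $\O$ and $u^{\ve}\le 0$ on $\R^n\setminus A$. Once both hold, Lemma \ref{teouA} immediately yields $u^{\ve}\le u_A^s$.

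The bound $(-\Delta)^s u^{\ve}\le 1$ follows from abstract considerations. By Lemma \ref{teouA}, each $u_{A_k\cup A^{\ve}}^s$ belongs to $\K_s$, which by Proposition \ref{conjuntok} is bounded, convex and strongly closed in $H^s_0(\O)$, hence weakly closed. Extracting a weakly convergent subsequence in $H^s_0(\O)$, whose limit must coincide with $u^{\ve}$ by uniqueness of the $L^2$-limit, the weak closedness of $\K_s$ gives $u^{\ve}\in\K_s$, i.e.\ the desired subsolution inequality.

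The heart of the proof is the condition $u^{\ve}\le 0$ on $\R^n\setminus A$. Since $u^{\ve}\ge 0$ as an $L^2$-limit of nonnegative functions, this is equivalent to $u^{\ve}=0$ a.e.\ on $\R^n\setminus A$. The hypothesis $u\le u_A^s$ is crucial here: together with $u\ge 0$ and $u_A^s=0$ on $\R^n\setminus A$, it forces $u=0$ a.e.\ on $\R^n\setminus A$, meaning that the mass of $A_k$ outside $A$ disappears in the $L^2$-limit. To transfer this to the enlarged sequence, I would introduce the function $M_k:=\max\{u_{A_k\cup A^{\ve}}^s, u_A^s\}$, which lies in $\K_s$ by the argument used in the proof of $(G_1)$ above and vanishes outside $A_k\cup A$; hence by Lemma \ref{teouA} it satisfies $M_k\le u_{A_k\cup A}^s$. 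Since $M_k-u_A^s=(u_{A_k\cup A^{\ve}}^s-u_A^s)^+$, this yields the pointwise estimate
\[
0\le (u_{A_k\cup A^{\ve}}^s-u_A^s)^+\le u_{A_k\cup A}^s-u_A^s,
\]
reducing the problem to showing $u_{A_k\cup A}^s\to u_A^s$ strongly in $L^2(\O)$.

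The main obstacle is precisely this last convergence, and it is here that the parameter $\ve>0$ is essential. Following the strategy of \cite{Buttazzo-DalMaso}, the truncation $(u_A^s-\ve)^+$ lies in $\K_s$ and is supported in $A^{\ve}\subset A$ (the nonlocal contributions to $(-\Delta)^s(u_A^s-\ve)^+$ from the region $\{u_A^s\le\ve\}$ have the favourable sign), so it provides a comparison function that captures the $\ve$-gap of $u_A^s$ on $A^{\ve}$. Combining it with $u_{A_k}^s$ via the max operation, one produces elements of $\K_s$ supported in $A_k\cup A^{\ve}$ that approach $u_A^s$ in $L^2$; comparing them from below with $u_{A_k\cup A^{\ve}}^s$ through Lemma \ref{teouA} extracts the required capacity control of $A_k\setminus A$, and yields $u_{A_k\cup A}^s-u_A^s\to 0$ in $L^2(\O)$ to conclude the proof.
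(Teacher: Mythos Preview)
Your overall strategy---verify $u^{\ve}\in\K_s$ and $u^{\ve}\le 0$ on $\R^n\setminus A$, then invoke Lemma~\ref{teouA}---coincides with the paper's (which defers to Lemmas~3.2 and~3.3 of \cite{Buttazzo-DalMaso}; the argument is spelled out here in the varying-$s$ analogue, Lemma~\ref{lemaepsilonsk}). Your treatment of $u^{\ve}\in\K_s$ is correct and matches. The reduction via $M_k=\max\{u_{A_k\cup A^{\ve}}^s,u_A^s\}\le u_{A_k\cup A}^s$ is a nice observation and is valid: it does reduce the lemma to the single convergence $u_{A_k\cup A}^s\to u_A^s$ in $L^2(\O)$.

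The gap is your last paragraph: you assert this convergence but do not prove it. The sentence ``extracts the required capacity control of $A_k\setminus A$'' is precisely the hard step, and your sketch (lower bounds via $\max\{(u_A^s-\ve)^+,u_{A_k}^s\}\le u_{A_k\cup A^{\ve}}^s$) only produces \emph{lower} bounds approaching $u_A^s$, whereas what you need is an \emph{upper} bound on $u_{A_k\cup A}^s$ tending to $u_A^s$. Nothing you have written forces the $L^2$-mass of $u_{A_k\cup A}^s$ on $A_k\setminus A$ to vanish.

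The paper supplies this missing ingredient by a different device. One introduces $v^{\ve}:=\tfrac{1}{\ve}(\ve-u_A^s)^+$ (so $v^{\ve}=0$ on $A^{\ve}$ and $v^{\ve}=1$ on $\R^n\setminus A$) and sets $w_{k,\ve}:=\min\{v^{\ve},u_{A_k\cup A^{\ve}}^s\}$. The point is that $w_{k,\ve}\in H^s_0(A_k)$: the $\min$ kills the $A^{\ve}$-part. One then invokes the key structural lemma (the fixed-$s$ analogue of Lemma~\ref{ELlemask}, i.e.\ Lemma~3.2 in \cite{Buttazzo-DalMaso}): if $w_k\in H^s_0(A_k)$ is bounded in $H^s$ and $w_k\to w$ in $L^2$, then $w\in H^s_0(\{u>0\})$. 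Applying this gives $\min\{v^{\ve},u^{\ve}\}=0$ on $\{u=0\}\supset\R^n\setminus A$; since $v^{\ve}=1$ there, $u^{\ve}=0$ there. Your route would also need this lemma (or an equivalent capacity estimate) to close, so at present the proposal is incomplete rather than genuinely alternative.
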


With the help of Lemma \ref{lemaepsilon} we are able to show that $G_s$ satisfies condition ($G_3$).

\begin{prop} \label{G3}
Let $0<s\le 1$. Then the functional $G_s$ satisfies ($G_3$). 
\end{prop}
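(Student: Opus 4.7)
I would establish the two inequalities separately, working with $A\in \A_s^c(\Omega)$ (the case relevant to the minimization problem \eqref{problem}).

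The easy half, $G_s(u_A^s)\le F_s(A)$, is immediate: $A$ itself is a competitor in the infimum defining $J_s(u_A^s)$ since $u_A^s\le u_A^s$, so $J_s(u_A^s)\le F_s(A)$; and by using the constant recovery sequence $w_k\equiv u_A^s\in\K_s$ in the definition of the lower semicontinuous envelope, $G_s(u_A^s)\le J_s(u_A^s)$.

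For the reverse inequality $F_s(A)\le G_s(u_A^s)$, I would follow the diagonal construction of \cite{Buttazzo-DalMaso}. First, choose a recovery sequence $w_k\in\K_s$ with $w_k\to u_A^s$ in $L^2(\Omega)$ and $\lim_k J_s(w_k)=G_s(u_A^s)$. For each $k$, use the definition of $J_s$ to pick $A_k\in\A_s^c(\Omega)$ with $u_{A_k}^s\le w_k$ and $F_s(A_k)\le J_s(w_k)+1/k$. Since $\K_s$ is bounded in $H^s_0(\Omega)$ by Proposition \ref{conjuntok}, so is $\{u_{A_k}^s\}$; passing to a subsequence, $u_{A_k}^s\to u$ strongly in $L^2(\Omega)$ with $u\le u_A^s$. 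Next I would introduce the perturbation from Lemma \ref{lemaepsilon}: fix $\ve>0$, set $A^\ve=\{u_A^s>\ve\}$ and $A_k^\ve=A_k\cup A^\ve$. Monotonicity $(H_2^s)$ gives $F_s(A_k^\ve)\le F_s(A_k)$, and the comparison principle for the state equation gives $u_{A_k^\ve}^s\ge u_{A^\ve}^s$. Extracting a further subsequence, $u_{A_k^\ve}^s\to u^\ve$ in $L^2(\Omega)$, and Lemma \ref{lemaepsilon} yields the sandwich $u_{A^\ve}^s\le u^\ve\le u_A^s$.

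Granted the identification $u^\ve=u_{B_\ve}^s$ for some $B_\ve\in\A_s^c(\Omega)$ (so that $A_k^\ve\stackrel{\gamma_s}{\to}B_\ve$), the sandwich forces $A^\ve\subset B_\ve\subset A$ quasi-everywhere. Then $(H_1^s)$ and $(H_2^s)$ chain together as
\[
F_s(A)\le F_s(B_\ve)\le \liminf_{k\to\infty} F_s(A_k^\ve)\le \liminf_{k\to\infty} F_s(A_k)\le G_s(u_A^s),
\]
giving the desired inequality (no further $\ve$-limit is needed). The main obstacle, in my view, is precisely the identification of the $L^2$-limit $u^\ve$ as $u_{B_\ve}^s$ for some quasi-open $B_\ve$. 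This is the fractional analogue of a $\gamma$-compactness property for Dirichlet problems that in \cite{Buttazzo-DalMaso} is proved via capacitary measures and Mosco convergence; in the nonlocal setting it should be accessible through the $s$-capacity introduced earlier and the variational characterization of $u_A^s$ from Lemma \ref{teouA}. Once this identification is in hand, the rest of the argument is direct bookkeeping of $(H_1^s)$, $(H_2^s)$, and the domain monotonicity of the state problem.
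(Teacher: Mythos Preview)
Your setup through the sandwich $u_{A^\ve}^s\le u^\ve\le u_A^s$ is correct and matches the paper. The gap is exactly where you flag it: the identification $u^\ve=u_{B_\ve}^s$ for some $s$-quasi open $B_\ve$. This is not merely a technical hurdle---it is generally \emph{false}. The $L^2$-closure of the set of torsion functions $\{u_A^s: A\in\A_s(\Omega)\}$ is strictly larger than itself; in the local case the $\gamma$-limits are solutions of relaxed Dirichlet problems $-\Delta u+\mu u=1$ for capacitary measures $\mu$, not torsion functions of quasi-open sets. Nothing in the sandwich rules this out for $u^\ve$, and the $s$-capacity and Lemma~\ref{teouA} do not provide such an identification. (Incidentally, \cite{Buttazzo-DalMaso} does \emph{not} prove this identification either; their argument for the analogous proposition is the diagonal one below.)

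The paper bypasses the identification entirely. From your sandwich one has $u^\ve\to u_A^s$ in $L^2(\Omega)$ as $\ve\downarrow 0$ (since $u_{A^\ve}^s\to u_A^s$; the paper verifies this via the pointwise bound $(u_A^s-\ve)^+\le u_{A^\ve}^s$, which in the nonlocal case requires a short computation). A standard diagonal argument then produces a sequence $\ve_k\downarrow 0$ with $u_{A_k\cup A^{\ve_k}}^s\to u_A^s$ in $L^2(\Omega)$, i.e.\ $A_k\cup A^{\ve_k}\stackrel{\gamma_s}{\to}A$. Now $(H_1^s)$ and $(H_2^s)$ give
\[
F_s(A)\le \liminf_{k\to\infty} F_s(A_k\cup A^{\ve_k})\le \liminf_{k\to\infty} F_s(A_k)\le \liminf_{k\to\infty} J_s(w_k),
\]
with no intermediate set $B_\ve$ needed. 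The point is that one does not need $\gamma_s$-compactness \emph{within} the class of quasi-open sets; one only needs to manufacture a sequence of enlargements of $A_k$ that $\gamma_s$-converges to the specific target $A$, and the diagonal trick does exactly this.
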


\begin{proof} 
We only need to consider $0<s<1$. Let us fix  $A\in\A_s^c(\Omega)$. From \eqref{G} it follows that $G_s(u_A^s)\leq F_s(A)$. To prove the reverse inequality, it suffices to see that 
$$
F_s(A)\leq \liminf_{k\to  \infty}J_s(w_k)
$$
for any sequence $\{w_k\}_{k\in \N}\subset \mathcal{K}_s$ such that $w_k\to u_A^s$ in $L^2(\O)$. 
	
By definition of $J_s$, there exists $A_k\in\A_s^c(\Omega)$ such that 
\begin{equation} \label{cotafinal}
F_s(A_k) \leq J_s(w_k) +\frac{1}{k}\qquad  \mbox{and} \qquad u_{A_k}^s\leq w_k.
\end{equation}  
	
Observe that $u_{A_k}^s\in \mathcal{K}_s$ for each $k\in \N$ and by Proposition \ref{conjuntok}, $\{u_{A_k}^s\}_{k\in \N}$ is  bounded   in $H_0^s(\O)$. Then, up to a subsequence, there exists $u\in \mathcal{K}_s$ such that $u_{A_k}^s \to u$ in $L^2(\O)$. Since $w_k\to u_A^s$  in $L^2(\O)$, from  $u_{A_k}^s\leq w_k$ we get  $u\leq u_A^s$.
	
Let us consider the set $A^{\ve}=\{ u_A^s> \ve \}$ and observe that $u_{A_k \cup A^{\ve}}^s \in \mathcal{K}_s$. Again by Proposition \ref{conjuntok}, it follows that  and $u_{A_k\cup A^{\ve}}^s \to u^{\ve}$ in $L^2(\O)$ for some $u^{\ve}\in \mathcal{K}_s$. By Lemma \ref{lemaepsilon}, inequality $u^{\ve}\leq u_A^s$ follows.
	
We claim that $(u_A^s -\ve)^+\le u_{A^\ve}^s$. Indeed,
$$
(u_A^s-\ve)^{+}(x)-(u_A^s-\ve)^{+}(y)= 
\begin{cases}
u_A^s(x)-u_A^s(y) & \text{ if } x, y \in A^\ve \\
u_A^s(x)-\ve & \text{ if } x\in A^\ve \text{ and } y\not\in A^\ve \\
-u_A^s(y)+\ve & \text{ if } x\not\in A^\ve \text{ and } y\in A^\ve \\
0 &\text{ otherwise.}
\end{cases}
$$

Then, for any $v\in H_0^s(A^{\ve})$ such that $v\ge 0$, we get
\begin{align*}
&\iint_{\R^n\times\R^n} \frac{( (u_A^s(x)-\ve)^+ - ((u_A^s(y)-\ve)^+)(v(x)-v(y))}{|x-y|^{n+2s}}\, dxdy = \\
& \iint_{A^\ve \times A^\ve} \frac{(u_A^s(x) - u_A^s(y))(v(x)-v(y))}{|x-y|^{n+2s}}\, dxdy + 2\iint_{A^\ve \times (A^\ve)^c} \frac{(u_A^s(x)-\ve) v(x)}{|x-y|^{n+2s}}\, dydx =\\
& \iint_{\R^n\times\R^n} \frac{(u_A^s(x) - u_A^s(y))(v(x)-v(y))}{|x-y|^{n+2s}}\, dxdy + 2 \iint_{A^\ve \times (A^\ve)^c} \frac{(u_A^s(y)-\ve) v(x)}{|x-y|^{n+2s}}\, dydx \le\\
&  \iint_{\R^n\times\R^n} \frac{(u_A^s(x) - u_A^s(y))(v(x)-v(y))}{|x-y|^{n+2s}}\, dxdy 
\end{align*}

That is,  $(-\Delta)^s (u_A^s-\ve)^+ \le (-\Delta)^s u_A^s = 1 = (-\Delta^s) u_{A^\ve}^s$ in $A^\ve$. Moreover, since $0=(u_A^s-\ve)^+ = u_{A^\ve}^s$ in $\R^n \setminus A^\ve$, from the comparison principle  it follows that $(u_A^s-\ve)^+ \le u_{A^\ve}^s$ in $\R^n$.
	
	We have obtained the following chain of inequalities
	$$
		(u_A^s-\ve)^+ \le u_{A^\ve}^s\le u_{A_k\cup A^\ve}^s.
	$$
 Taking limit as $k\to\infty$ we conclude that
	$$
		(u_A^s-\ve)^+ \le u^\ve\le u_A^s.
	$$
	since $u^\ve \le u_A^s$ and $u_{A_k\cup A^\ve}^s \to u^\ve$.
	
Since $u^\ve \in \mathcal{K}_s$, $\{u^\ve\}_{\ve>0}$ is uniformly bounded in $H_0^s(\O)$. Consequently, up to a subsequence, $u^\ve\to u_A^s\in L^2(\Omega)$. 

By a standard diagonal argument, there exists a sequence $\{\ve_k\}_{k\in\N}$ such that $u_{A_k\cup A^{\ve_k}}^s\to u_A^s$ in $L^2(\O)$.
	 
In conclusion, we have proved that $(A_k\cup A^{\ve_k})$ $\gamma_s-$converges to $A$. Therefore
$$
F_s(A)\le \liminf_{k\to\infty} F_s(A_k\cup A^{\ve_k}) \le \liminf_{k\to\infty} F_s(A_k) \le \liminf_{k\to\infty} J_s(w_k).
$$
This fact concludes the proof of the proposition.
\end{proof}

Having proved these preliminary results, the proof of Theorem \ref{main.s} follows 	in the same way of that of \cite[Theorem 2.5]{Buttazzo-DalMaso}. We include the details for the reader convenience.

\begin{proof}[Proof of Theorem \ref{main.s}]
First, we solve \eqref{minG}. Take $\{w_k\}_{k\in \N} \subset \K_s$ be such that $|\{w_k>0\}|\le c$ and 
$$
\lim_{k\to \infty}G_s(w_k)= \inf\left\{ G_s(w) \colon w\in \K_s, \, |\{ w>0 \}|\le c  \right\}=:m_{G_s}.
$$

By Proposition \ref{conjuntok}, there exists $w_0 \in \K_s$ such that $w_k\to w_0$ strongly in $L^2(\O)$, up to a subsequence. Thus, $|\{w_0 >0\}|\le c$. Then, by $(G_2)$,
$$
m_{G_s}\le G_s(w_0)\le \liminf_{k\to\infty} G_s(w_k)=m_{G_s}.
$$
So, $w_0$ is a solution to \eqref{minG}.

Now, consider $A_0:=\{ w_0 >0 \}$. Then, $A_0\in \A_s^c(\O)$. By Lemma \ref{teouA}, $w_0 \le u_{A_0}^s$.

For every $A\in \A_s^c(\O)$, we konw that $u_A^s\in \K_s$, $|\{u_A^s>0\}|\le c$. Then, by $(G_3)$,$(G_1)$ and the fact that $w_0$ is the solution to \eqref{minG}, we have
$$
F_s(A_0)=G_s(u_{A_0}^s)\le G_s(w_0)\le G_s(u_A^s)=F_s(A).
$$

Therefore, $A_0$ is a solution to \eqref{problem}.
\end{proof}

\section{Proof of Theorem \ref{main}} \label{sec.teo2}
 In this section we prove Theorem \ref{main} following the same spirit of \cite{Buttazzo-DalMaso}, however, nontrivial changes must be performed due to the nonlocal settings.

Our first goal is to show that a sequence $\{u_k\}_{k\in\N}\subset L^2(\O)$ such that $u_k\in \K_{s_k}$  is precompact and that every accumulation point belongs to $\K_1$.

This is the content of the next lemma.

\begin{lema} \label{Ksacotado}
Let $0<s_k\uparrow 1$ and let $u_k\in \K_{s_k}$. Then, there exists $u\in H^1_0(\O)$  and a subsequence $\{u_{k_j}\}_{j\in \N}\subset\{u_k\}_{k\in\N}$ such that $u_{k_j} \to u$ strongly in $L^2(\O)$. 
	
Moreover, if $u_k \in \K_{s_k}$ is such that $u_k\to u$ strongly in $L^2(\O)$, then $u\in \K_1$.
\end{lema}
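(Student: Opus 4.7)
The plan is to obtain uniform bounds on $\{u_k\}$, apply a Bourgain--Brezis--Mironescu (BBM) type compactness theorem to extract a strongly convergent subsequence in $L^2(\O)$, and then pass to the limit in the weak inequality defining $\K_{s_k}$ to identify the limit. By Remark \ref{cota.ks} we have $(1-s_k)[u_k]_{s_k}^2 \le C$ uniformly in $k$, and the Poincar\'e-type inequality mentioned there (with constant of order $1-s_k$) then gives $\|u_k\|_{L^2(\O)}\le C'$ uniformly as well. Extending each $u_k$ by zero outside $\O$, the BBM-type compactness result used in \cite{BPS} yields a subsequence $u_{k_j}\to u$ strongly in $L^2(\R^n)$ with $u\in H^1(\R^n)$; since every $u_{k_j}$ vanishes outside the fixed bounded set $\O$ so does $u$, hence $u\in H^1_0(\O)$, which gives the first assertion.

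For the second assertion, suppose $u_k\in\K_{s_k}$ with $u_k\to u$ in $L^2(\O)$. Non-negativity of $u$ follows along an a.e.\ convergent subsequence. For the differential inequality $-\Delta u\le 1$ in $\O$, fix $v\in C_c^\infty(\O)$ with $v\ge 0$; then $v\in H^{s_k}_0(\O)$ for every $k$, so duality (symmetry of the kernel) rewrites the condition $(-\Delta)^{s_k} u_k\le 1$ in $\O$ as
$$
\int_{\R^n} u_k\,(-\Delta)^{s_k} v\, dx \le \int_\O v\, dx.
$$
The plan is to prove that $(-\Delta)^{s_k}v\to -\Delta v$ in $L^2(\O)$; combined with $u_k\to u$ strongly in $L^2(\O)$ (and the support of $u_k$ lying in $\O$), this allows passage to the limit and yields $\int_\O u\,(-\Delta)v\, dx\le \int_\O v\, dx$, i.e.\ $\int_\O \nabla u\cdot\nabla v\, dx\le \int_\O v\, dx$ after integration by parts. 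A standard density argument for non-negative test functions in $H^1_0(\O)$ then concludes that $u\in\K_1$.

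The main obstacle is the uniform-in-$k$ convergence $(-\Delta)^{s_k}v\to -\Delta v$ for $v\in C_c^\infty(\O)$. Pointwise convergence is \eqref{conv.lap}, but since $c(n,s_k)\sim (1-s_k)$ by \eqref{asinto} while the kernel $|x-y|^{-n-2s_k}$ becomes more singular as $s_k\uparrow 1$, one must exhibit an $L^2$ majorant uniform in $k$. The strategy is to split the integral defining $(-\Delta)^{s_k}v(x)$ into the near-diagonal region $\{|x-y|<1\}$, where a second-order Taylor expansion of $v$ around $x$ cancels the principal-value singularity and produces a bound controlled by $\|D^2 v\|_\infty$ uniformly in $s_k$, and its complement, where the compact support of $v$ together with the kernel decay yield a bound of the form $C_v(1+|x|)^{-n-2}$ at infinity, again uniform in $k$. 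Dominated convergence on $\O$ then provides the required $L^2$ convergence and closes the argument.
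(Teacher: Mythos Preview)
Your proof is correct and follows essentially the same approach as the paper: both invoke the uniform bound from Remark~\ref{cota.ks} together with the BBM compactness theorem \cite[Theorem~4]{BBM} for the first assertion, and for the second both transfer the fractional Laplacian onto a smooth nonnegative test function and use the convergence $(-\Delta)^{s_k}v\to -\Delta v$ in $L^2(\O)$ to pass to the limit in the inequality. The only real difference is cosmetic: you supply an explicit dominated-convergence argument (near/far splitting with a Taylor expansion) for this $L^2$ convergence, whereas the paper simply asserts it as a strengthening of \eqref{conv.lap}.
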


\begin{proof}
From Remark \ref{cota.ks}, there exists a constant $C>0$ such that
$$
\sup_{k\in\N}(1-s_k)[u_k]_{s_k}^2\le C.
$$
Now the first claim follows from \cite[Theorem 4]{BBM}.

Now, assume that $u_k\to u$ in $L^2(\O)$. It is clear that $u\geq 0$. Since $(-\Delta)^{s_k} u_k\leq 1$ in $\Omega$, for every non-negative $\vp \in C_c^{\infty}(\O)$ we have that
$$
\int_\Omega (-\Delta )^{s_k} \varphi u_k \, dx  = \langle (-\Delta )^{s_k} u_k, \varphi \rangle  \leq \int_{\O} \vp \, dx.
$$

By the convergence assumption on $u_k$ and the fact that the convergence \eqref{conv.lap} is also strong in $L^2(\Omega)$, we can take limit as $k\to \infty$ in the previous inequality to obtain that
$$
\int_\Omega -\Delta \vp u\, dx  = \langle -\Delta  u , \vp \rangle \leq \int_{\O} \vp \, dx,
$$
and conclude that   $-\Delta u \leq 1$ in $\O$. Consequently, $u\in \K_1$ as required.
\end{proof}

Analogously as in the previous section, we define the following functionals for the limit problem
$$
J_1(w):=\inf \left\{ F_1(A)\colon A\in \A_1^c(\Omega),\  u_A\le w \right\},
$$
where $\A_1^c(\O)$ is defined in \eqref{Asc} for $s=1$ and we define $G_1$ to be the lower semicontinuous envolpe of $J_1$ in $\K_1$.

The next lemma gives the continuity of $u_A^s$ when $s\uparrow 1$.
\begin{lema}\label{Afijo}
	For every $A \in \A_1(\Omega) $, $u_A^s\to u_A$ strongly in $L^2(\Omega)$, when $s\uparrow 1$.
\end{lema}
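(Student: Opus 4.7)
The plan is to combine a uniform energy bound for the family $\{u_A^s\}_{0<s<1}$ with the Bourgain--Brezis--Mironescu compactness theorem, and then to identify the $L^2(\O)$--limit with $u_A=u_A^1$ by means of a variational ($\Gamma$-convergence type) argument for the energies
$$
I_s(w):=\frac{c(n,s)}{2}[w]_s^2-\int_\O w\,dx,\qquad I_1(w):=\frac{1}{2}\|\nabla w\|_2^2-\int_\O w\,dx,
$$
whose unique minimizers over $H_0^s(A)$ and $H_0^1(A)$ are, respectively, $u_A^s$ and $u_A$.

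First I would test the equation \eqref{uas} with $u_A^s$ itself to obtain $\frac{c(n,s)}{2}[u_A^s]_s^2=\int_\O u_A^s\,dx$, and then combine this with Cauchy--Schwarz and the fractional Poincar\'e inequality recalled in Remark \ref{cota.ks} to deduce the uniform bound $(1-s)[u_A^s]_s^2\le C$, with $C$ independent of $s$. Lemma \ref{Ksacotado} then provides, along any sequence $s_k\uparrow 1$, a (not relabelled) subsequence and a function $u\in \K_1$ with $u_A^{s_k}\to u$ strongly in $L^2(\O)$. Since $u_A^{s_k}$ vanishes in $\R^n\setminus A$ for every $k$, the same holds for $u$, so $u\in H_0^1(A)$.

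Next I would establish the matching liminf and limsup inequalities. For the liminf, the lower semicontinuity counterpart of \eqref{bbm2} gives $\|\nabla u\|_2^2\le \liminf_k \frac{c(n,s_k)}{2}[u_A^{s_k}]_{s_k}^2$, and since $u_A^{s_k}\to u$ in $L^2(\O)$ this yields $I_1(u)\le \liminf_k I_{s_k}(u_A^{s_k})$. For the limsup, given any $\vp\in C_c^\infty(A)$ one has $\vp\in H_0^{s_k}(A)$, so by minimality $I_{s_k}(u_A^{s_k})\le I_{s_k}(\vp)$, and passing to the limit via \eqref{bbm2} gives $\limsup_k I_{s_k}(u_A^{s_k})\le I_1(\vp)$. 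Approximating $u_A$ in $H^1$-norm by a sequence of such $\vp$'s then yields $\limsup_k I_{s_k}(u_A^{s_k})\le I_1(u_A)$. Combining the two chains, $I_1(u)\le I_1(u_A)$; since $u\in H_0^1(A)$ and $u_A$ is the unique minimizer of $I_1$ over $H_0^1(A)$, we conclude $u=u_A$. As every subsequence of $\{u_A^s\}$ admits a further subsequence converging to the same limit $u_A$, the full family converges to $u_A$ strongly in $L^2(\O)$ as $s\uparrow 1$.

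The main technical obstacle I anticipate is the approximation step at the end of the limsup argument: when $A$ is only $1$-quasi-open, the density of $C_c^\infty$ test functions in $H_0^1(A)$ has to be obtained through the defining sequence of open sets $A\cup\omega_k$ with $\cp_1(\omega_k,\O)\to 0$, via a standard but careful capacity argument. A closely related subtlety is upgrading the a.e.\ vanishing of the $L^2$-limit $u$ outside $A$ to a quasi-everywhere statement, so that $u$ genuinely belongs to $H_0^1(A)$ in the sense dictated by the definition of $\A_1(\O)$; I would expect these capacity-theoretic points to be the delicate part of an otherwise routine $\Gamma$-type passage to the limit.
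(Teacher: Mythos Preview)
Your argument is correct and follows essentially the same route as the paper: both characterize $u_A^s$ and $u_A$ as the unique minimizers of the energies $I_s$ and $I_1$ and pass to the limit via a $\Gamma$-convergence argument for these functionals. The only difference is that the paper invokes Ponce's $\Gamma$-convergence result as a black box (together with stability under continuous perturbations and convergence of minimizers) rather than spelling out the liminf/limsup inequalities by hand; the capacity-theoretic subtlety for merely quasi-open $A$ that you correctly flag is not explicitly addressed in the paper's proof either.
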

\begin{proof}
	Let us remind that, from Lemma \ref{igualdad}, $u_A^s$ is also the solution to the minimization problem 
	$$
	I_s(u_A^s)=\min \{ I_s(w) \colon  w\in L^2(\O)\},
	$$
	where
	$$
	I_s(w)=\begin{cases}\begin{array}{ll}
	\frac{c(n,s)}{2}[w]_s^2-\int_{\O} w \, dx & \textrm{ if } w\in H_0^s(A), \\
	\infty & \textrm{ otherwise.}
	\end{array}
	\end{cases}
	$$
	Notice that, by \cite{Ponce}, we have that $\frac{c(n,s)}{2}[w]_s^2 \stackrel{\Gamma}{\to} \frac{1}{2}\| \nabla w \|_2^2$.	Since the $\Gamma$-convergence is stable under continuous perturbations, we have that $I_s \stackrel{\Gamma}{\to}  I_1$ in $L^2(\O)$, where
	$$
	I_1(w)=\begin{cases}\begin{array}{ll}
	\frac{1}{2}\| \nabla w \|_2^2-\int_{\O} w \, dx & \textrm{ if } w\in H_0^1(A), \\
	\infty & \textrm{ otherwise.}
	\end{array}
	\end{cases}
	$$
	Thus, the minimizer of $I_s$ converges to the minimizer of $I_1$. That is $u_A^s\to u_A$ strongly in $L^2(\O)$.
\end{proof}

Now we address the more difficult problem of understanding the limit behavior of $u_A^s$ when the domains also are varying with $s$.

This first lemma is key in understanding this limit behavior and the ideas are taken from \cite{Buttazzo-DalMaso}.
\begin{lema} \label{ELlemask}
	Let $0<s_k\uparrow 1$ and for every $k\in \N$ let  $A_k\in \A_{s_k}(\Omega)$ be such that $u_{A_k}^{s_k} \to u$ strongly in $L^2(\O)$. Let $\{w_k\}_{k\in \N}\subset L^2(\O)$ be such that $w_k \in H_0^{s_k}(A_k)$ for every $k\in \N$ and $\sup_{k\in \N}(1-s_k)[w_k]^2_{s_k}<\infty$. Assume, moreover that $w_k\to w$ strongly in $L^2(\O)$. Then, $w\in H_0^1(\{u>0\})$.
\end{lema}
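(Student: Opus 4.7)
The conclusion $w\in H^1_0(\{u>0\})$ breaks into two pieces: that $w\in H^1(\R^n)$ with $w=0$ on $\R^n\setminus\O$, and that $w$ vanishes almost everywhere on the null set $N:=\{u=0\}\cap\O$. My plan is to handle them in turn, the first being an immediate invocation of a previous result and the second constituting the substantive part of the argument.

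For the first piece, I would invoke the Bourgain-Brezis-Mironescu compactness theorem exactly as in the proof of Lemma \ref{Ksacotado}: the hypothesis $\sup_k(1-s_k)[w_k]^2_{s_k}<\infty$, combined with the strong $L^2$ convergence $w_k\to w$, gives $w\in H^1(\R^n)$. Passing to a subsequence (not relabeled) so that $u_{A_k}^{s_k}\to u$ and $w_k\to w$ a.e.\ in $\O$, and using that each $w_k$ vanishes outside $A_k\subset\O$, we obtain $w=0$ a.e.\ on $\R^n\setminus\O$.

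For the second piece, I would carry out a cut-off argument in the spirit of \cite{Buttazzo-DalMaso}. For each $\delta>0$, let $\zeta_\delta\colon\R\to[0,1]$ be the Lipschitz function equal to $0$ on $(-\infty,\delta]$ and to $1$ on $[2\delta,\infty)$, affine in between. Bounded a.e.\ convergence gives $\zeta_\delta(u_{A_k}^{s_k})\to\zeta_\delta(u)$ in $L^p(\O)$ for every finite $p$, and together with $w_k\to w$ in $L^2(\O)$ this yields $w_k\zeta_\delta(u_{A_k}^{s_k})\to w\,\zeta_\delta(u)$ in $L^1(\O)$. The limit product is supported in $\{u\ge\delta\}$, hence vanishes on $N$.

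The main obstacle, and the technical heart of the proof, is to show that the complementary ``cut-off tail'' $r_k^\delta:=w_k[1-\zeta_\delta(u_{A_k}^{s_k})]$, supported on $A_k\cap\{u_{A_k}^{s_k}\le 2\delta\}$, satisfies $\limsup_{k\to\infty}\|r_k^\delta\|_{L^2(\O)}\to 0$ as $\delta\downarrow 0$. Granting this bound, passing to the limit in the decomposition $w_k=w_k\zeta_\delta(u_{A_k}^{s_k})+r_k^\delta$ yields $w=w\,\zeta_\delta(u)$ a.e., and then sending $\delta\downarrow 0$ forces $w=0$ a.e.\ on $N$, concluding $w\in H^1_0(\{u>0\})$. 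The natural route to the remainder bound is a fractional Hardy-type inequality applied to $w_k\in H^{s_k}_0(A_k)$ together with a barrier lower bound of the form $u_{A_k}^{s_k}(x)\ge c\,\dist(x,A_k^c)^{2s_k}$, leading schematically to an estimate of the type $\int_{A_k} w_k^2/u_{A_k}^{s_k}\,dx\le C$ and thus $\|r_k^\delta\|_{L^2}^2\le 2\delta\,C$; the delicate point is carrying out these estimates with constants that remain uniform in $s_k$ as $s_k\uparrow 1$, correctly tracking the $(1-s_k)$-scaling already present in the hypothesis.
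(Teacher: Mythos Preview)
Your first piece is correct and matches the paper's implicit use of the Bourgain--Brezis--Mironescu compactness.

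The second piece, however, has a genuine gap. The barrier lower bound $u_{A_k}^{s_k}(x)\ge c\,\dist(x,A_k^c)^{2s_k}$ that you invoke is known only under regularity assumptions on $A_k$ (typically an interior ball condition), and the fractional Hardy inequality with weight $\dist(\cdot,A_k^c)^{-2s_k}$ likewise requires boundary regularity. Here the $A_k$ are arbitrary $s_k$--quasi open subsets of $\Omega$, so neither tool is available; for irregular $A_k$ the torsion function can vanish at the boundary at rates that have nothing to do with $\dist^{2s_k}$. Even granting some ``torsion Hardy'' inequality of the form $\int_{A_k} w_k^2/u_{A_k}^{s_k}\,dx\le C$, you would still have to establish it uniformly in $s_k$ with the correct $(1-s_k)$ scaling, and you do not indicate how. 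As written the remainder estimate $\limsup_k\|r_k^\delta\|_{L^2}\to 0$ is not justified.

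The paper circumvents this entirely through a $\Gamma$--convergence argument. One extracts a $\Gamma$--limit $\Phi$ of the quadratic forms $\Phi_k(v)=\tfrac{c(n,s_k)}{2}[v]_{s_k}^2$ on $H^{s_k}_0(A_k)$ (extended by $+\infty$), notes that $w\in D(\Phi)$, and uses that the operator domain $D(T)$ associated to $\Phi$ is dense in $D(\Phi)$ for the $H^1_0(\Omega)$ topology. It then suffices to show that every $v\in D(T)$ vanishes on $\{u=0\}$. But such a $v$ is the $L^2$--limit of solutions $v_k$ to $(-\Delta)^{s_k}v_k=f$ in $H^{s_k}_0(A_k)$; after truncating $f$ to a bounded $f^\ve$, the comparison principle gives $|v_k^\ve|\le \|f^\ve\|_\infty\, u_{A_k}^{s_k}$, and passing to the limit yields $|v^\ve|\le \|f^\ve\|_\infty\, u$, hence $v^\ve=0$ on $\{u=0\}$. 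This route uses only the comparison principle and requires no regularity whatsoever of the $A_k$, which is exactly what your approach cannot do without.
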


\begin{proof}
We need to show that $w=0$ in $\R^n \setminus \{u>0\}$, i.e., $w=0$ in $\{u=0 \}$.
	
Let us define the functional 
	\begin{equation} \label{phi}
	\Phi_k(v)= 
	\begin{cases}
	\begin{array}{ll}
	\frac{c(n,s_k)}{2}[v]_{s_k}^2 &\textrm{ if } v\in H_0^{s_k}(A_k), \\
	+\infty &\textrm{ otherwise}.
	\end{array}
	\end{cases}
	\end{equation}
	defined in $L^2(\Omega)$. By the compactness of $\Gamma$-convergence, there exists a subsequence still denote by $\Phi_k$ such that 
	$$
	\Phi_k \stackrel{\Gamma}{\to} \Phi \quad \mbox{ in }L^2(\Omega).
	$$ 	
	From \cite[Theorem 11.10]{DalMaso},  $\Phi$ is  a quadratic form in $L^2(\Omega)$ with domain $D(\Phi) \subset L^2(\O)$.

	Observe that $w\in D(\Phi)$, since
	$$
	\Phi(w)\leq \liminf_{k\to +\infty} \Phi_k(w_k)\le \sup_{k\in\N} \frac{c(n,s_k)}{2}[w_k]_{s_k}^2 \le C\sup_{k\in\N} (1-s_k) [w_k]_{s_k}^2 <  \infty.
	$$ 
	
	Let $B\colon D(\Phi)\times D(\Phi)\to \R$ be the bilinear form associeted to $\Phi$, which is defined by
	$$
	B(v,\eta)=\frac{1}{4}(\Phi(v+\eta)-\Phi(v-\eta)).
	$$
	Let us denote by $V$ the closure of $D(\Phi)$ in $L^2(\Omega)$ and consider the linear operator  $T\colon D(T)\subset L^2(\Omega) \to L^2(\Omega)$ defined as $Tv=f$ where
	$$
	D(T)=\left\{ v\in D(\Phi) \colon \exists f \in V \textrm{ such that } B(v,\eta)=\int_\O{f\eta \, dx}, \  \forall \eta \in D(\Phi)\right\}.
	$$	
	By \cite[Proposition 12.17]{DalMaso}, $D(T)$ is dense in $D(\Phi)$ with respect to the norm 
	$$
	\|v\|_{\Phi}=(\|v\|_{L^2(\O)}+\Phi(v))^{\frac{1}{2}}.
	$$
	Moreover,   the following relation holds
	\begin{equation} \label{relac}
	\sqrt{2}\| \cdot \|_{\Phi}\geq \| \cdot \|_{H_0^1(\O)}.
	\end{equation}
	Indeed, 	if $z\in D(\Phi)$, as $\Phi_k \stackrel{\Gamma}{\rightarrow} \Phi$ in $L^2(\Omega)$, there exists $\{z_k\}_{k\in \N}$ such that $z_k\to z$ in $L^2(\Omega)$ and 
	$$
	\infty>\Phi(z)=\lim_{k\to\infty}{\Phi_k(z_k)}=
	\begin{cases}
	\begin{array}{ll}
	\lim_{k\to\infty} \frac{c(n,s_k)}{2}[z_k]_{s_k}^2 &\textrm{ if } z_k\in H_0^{s_k}(A_k), \\
	\infty &\textrm{ otherwise. }
	\end{array}
	\end{cases}
	$$  
	Thus, $z_k\in H_0^{s_k}(A_k)$ and then 
	$$
	\|z\|_{H^1_0(\O)}^2 \leq \liminf_{k\to \infty} c(n,s_k) [z_k]_{s_k}^2 = 2 \lim_{k\to \infty}\Phi_k(z_k) = 2\Phi(z) \leq 2 \|z\|_\Phi^2.
	$$

	Since \eqref{relac} holds, $D(T)$ is dense in $D(\Phi)$ with respect to  the strong topology of $H^1_0(\Omega)$. Now to achieve the proof it is enough to prove that $v=0$ in $\{ u=0\}$ for all $v\in D(T)$.

	Let $v\in D(T)$ and let $f\in Tv$; then $v$ is a minimum point of the functionl
	$$
	\Psi(\eta)=\frac{1}{2}\Phi(\eta)-\int_{\O}{f\eta \, dx}
	$$
	(see, \cite[Proposition 12.12]{DalMaso}).	Let $v_k$ be the minimum point of functional 
	$$
	\Psi_k(\eta):=\frac{1}{2}\Phi_k(\eta)-\int_{\O}{f\eta \, dx};
	$$
	then $v_k$ is the solution of the problem
	$$
	(-\Delta)^{s_k} v_k=f, \qquad v\in H^{s_k}_0(A_k).
	$$
	Since $\Phi_k \stackrel{\Gamma}{\rightarrow} \Phi$, then $\Psi_k \stackrel{\Gamma}{\rightarrow} \Psi$ and so we have that $v_k\to v$ strongly in $L^2(\Omega)$.
	
	For $\ve>0$ we consider $f^{\ve}$ to be a bounded function with compact support such that $\|f^{\ve}-f\|_2 <\ve$ and  $v_k^{\ve}$ is  solution of 
	$$
	(-\Delta)^{s_k} v_k^{\ve}=f^{\ve} \textrm{ in } A_k, \qquad v_k^{\ve} \in H_0^{s_k}(A_k).
	$$ 
 
	By using the linearity of the operator together with H\"older's and Poincar\'e's inequalities we get
	\begin{align*}
		\frac{c(n,s_k)}{2}[v^{\ve}_k-v_k]^2_{s_k} 
		&= \int_\Omega (f^\ve-f)(v_k^\ve-v_k)\,dx\\
		& \leq  \|f_\ve-f\|_2 \|v_k^\ve-v_k\|_2.
	\end{align*}
	From Poincar\'e's inequality we obtain that
	$$
		(1-s_k)[v^{\ve}_k-v_k]^2_{s_k} \leq C \ve^2,
	$$
	where $C$ is independent on $k$.

	Then, from \cite[Theorem 4]{BBM}, up to a subsequence, $v_k^\ve \to v^\ve$ strongly in $L^2(\Omega)$ and $\|v^\ve-v\|_{H^1_0(\O)}\leq C\ve$.	At this point is enough to prove that $v^{\ve}=0$ in $\{u=0\}$ for all $\ve>0$.

	Since $f^{\ve} \leq c^{\ve}:=\|f^{\ve}\|_{\infty}$ and 
	$$
	(-\Delta)^{s_k} v^{\ve}_k=f^{\ve}\leq c^{\ve}=(-\Delta)^{s_k}(c^{\ve}u_{A_k}^{s_k})\textrm{ in } A_k, \qquad
	v_k^{\ve}= c^{\ve}u_{A_k}^{s_k}=0 \textrm{ in } \R^n \setminus A_k,
	$$
	the comparison principle gives that $v^{\ve}_k\leq c^{\ve}u_{A_k}^{s_k}$.	Analogously,
	$-v^\ve_k\le c^\ve u_{A_k}^{s_k}$.
	
	As $k\to \infty$, we obtain that $|v^{\ve}|\leq c^\ve u$, which implies that $v^{\ve}=0$ in $\{u=0\}$ for any $\ve>0$ and that completes the proof.
\end{proof}

The next lemma is the counterpart of Lemma \ref{lemaepsilon} of the previous section. We include here the details for completeness. The main modifications with respect to the previous proof (which was analogous to that of Lemmas 3.2 and 3.3 in \cite{Buttazzo-DalMaso}) where carried out in the previous Lemmas of this section.
\begin{lema} \label{lemaepsilonsk}
Let $0<s_k\uparrow 1$ and for every $k\in\N$,  let $A_k\in \mathcal{A}_{s_k}(\Omega)$, $A \in \mathcal{A}_1(\Omega)$.  Assume that $u_{A_k}^{s_k}\to u$ in $L^2(\O)$ and that $u\leq u_A$.

Then, if $u_{A_k\cup A^{\ve}}^{s_k} \to u^{\ve}$ strongly in $L^2(\O)$, where $A^{\ve}:=\{ u_A>\ve\}$, it holds that $u^{\ve} \leq u_A$.
\end{lema}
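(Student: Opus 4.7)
The plan is to mimic the proof of Lemma~\ref{lemaepsilon}, with the novelty of varying exponents $s_k$ absorbed by Lemmas~\ref{Ksacotado}, \ref{Afijo}, and \ref{ELlemask}. First, since $u^{s_k}_{A_k\cup A^{\ve}}\in\K_{s_k}$ converges in $L^2$ to $u^{\ve}$, Lemma~\ref{Ksacotado} gives $u^{\ve}\in\K_1$, so $u^{\ve}\ge 0$ and $-\Delta u^{\ve}\le 1$ weakly in $\Omega$. The $s=1$ case of Lemma~\ref{teouA} then reduces the desired inequality $u^{\ve}\le u_A$ to the support statement $u^{\ve}\in H^1_0(A)$.

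Next, I decompose $u^{\ve}=u+g$, where $g$ is the $L^2$-limit of the non-negative sequence $g_k:=u^{s_k}_{A_k\cup A^{\ve}}-u^{s_k}_{A_k}\in H^{s_k}_0(A_k\cup A^{\ve})$; non-negativity follows from the monotonicity of the torsion function in Lemma~\ref{teouA}, and testing the equations for $u^{s_k}_{A_k\cup A^{\ve}}$ and $u^{s_k}_{A_k}$ against $u^{s_k}_{A_k}\in H^{s_k}_0(A_k\cup A^{\ve})$ produces the energy identity $c(n,s_k)[g_k]_{s_k}^2=\int_{\Omega}g_k\,dx$. Combined with Remark~\ref{cota.ks}, this gives $\sup_k(1-s_k)[g_k]_{s_k}^2<\infty$, and Lemma~\ref{Ksacotado} yields $g\in H^1_0(\Omega)$. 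Moreover, the hypothesis $u\le u_A$ and Lemma~\ref{ELlemask} applied to $\{A_k\}$ with $w_k=u^{s_k}_{A_k}$ already give $u\in H^1_0(\{u>0\})\subset H^1_0(A)$, so the task reduces to proving $g\in H^1_0(A)$.

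To do this, I would replay the $\Gamma$-convergence plus operator-theoretic argument from the proof of Lemma~\ref{ELlemask} for the enlarged sequence $B_k:=A_k\cup A^{\ve}$ (noting $u^{s_k}_{B_k}\to u^{\ve}$), but replace the final pointwise comparison $|v^{\eta}_k|\le c^{\eta}u^{s_k}_{B_k}$ by a refined bound $|v^{\eta}_k|\le c^{\eta}\Phi_k$ whose majorant $\Phi_k$ has an $L^2$-limit supported in $A$. The natural candidate is $\Phi_k=u^{s_k}_{A_k}+u^{s_k}_{A^{\ve}}$: its $L^2$-limit $u+u_{A^{\ve}}$ is supported in $A$ since $u\in H^1_0(A)$ (by the preceding paragraph) and $u_{A^{\ve}}\in H^1_0(A^{\ve})\subset H^1_0(A)$ (using Lemma~\ref{Afijo} for the convergence $u^{s_k}_{A^{\ve}}\to u_{A^{\ve}}$). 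Passing to the limit then forces the approximating $v^{\eta}$'s to vanish on $\R^n\setminus A$, and density of such $v^{\eta}$'s in the $\Gamma$-limit domain yields $g\in H^1_0(A)$; hence $u^{\ve}=u+g\in H^1_0(A)$ and the $s=1$ case of Lemma~\ref{teouA} delivers $u^{\ve}\le u_A$.

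The hardest step will be justifying this refined comparison: the candidate $u^{s_k}_{A_k}+u^{s_k}_{A^{\ve}}$ fails to satisfy $(-\Delta)^{s_k}w\ge c^{\eta}$ uniformly on $B_k$, because at any point $x\in A_k\setminus A^{\ve}$ one has $(-\Delta)^{s_k}u^{s_k}_{A^{\ve}}(x)\le 0$ (and symmetrically on $A^{\ve}\setminus A_k$). Overcoming this will likely require splitting $B_k$ into its three natural pieces $A_k\cap A^{\ve}$, $A_k\setminus A^{\ve}$, $A^{\ve}\setminus A_k$ and combining a direct nonlocal maximum principle on $A_k\cap A^{\ve}$ with a careful analysis of the nonlocal tails on the remaining two pieces.
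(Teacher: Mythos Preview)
Your reduction to showing $u^{\ve}\in H^1_0(A)$ via Lemma~\ref{teouA} and Lemma~\ref{Ksacotado} is correct, and your energy identity for $g_k$ is valid. However, the proposal has a genuine gap precisely where you flag it: the majorant $\Phi_k=u^{s_k}_{A_k}+u^{s_k}_{A^{\ve}}$ is \emph{not} a supersolution of $(-\Delta)^{s_k}w=c^{\eta}$ on $B_k=A_k\cup A^{\ve}$, because on $A_k\setminus A^{\ve}$ the nonlocal contribution of $u^{s_k}_{A^{\ve}}$ is nonpositive (and symmetrically on $A^{\ve}\setminus A_k$). The splitting you sketch in the last paragraph does not overcome this: on $A_k\setminus A^{\ve}$ the tail term can be as negative as $-c^{\eta}$ times a fixed quantity, so no comparison of the form $|v^{\eta}_k|\le c^{\eta}\Phi_k$ survives there. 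Reproducing the machinery of Lemma~\ref{ELlemask} for the enlarged sets $B_k$ therefore does not yield the support conclusion you need.

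The paper sidesteps this entirely by a truncation trick that never leaves the \emph{original} sequence $A_k$. Set
\[
v^{\ve}:=\tfrac{1}{\ve}(\ve-u_A)^+,\qquad w_{k,\ve}:=\min\{v^{\ve},\,u^{s_k}_{A_k\cup A^{\ve}}\}.
\]
Since $v^{\ve}=0$ on $A^{\ve}$ and $u^{s_k}_{A_k\cup A^{\ve}}=0$ on $\R^n\setminus(A_k\cup A^{\ve})$, one has $w_{k,\ve}\in H^{s_k}_0(A_k)$ with uniformly bounded $(1-s_k)[w_{k,\ve}]_{s_k}^2$. Now Lemma~\ref{ELlemask} applies directly to the sequence $\{A_k\}$ (no enlargement needed), giving $\min\{v^{\ve},u^{\ve}\}\in H^1_0(\{u>0\})\subset H^1_0(A)$. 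Because $v^{\ve}=1$ on $\R^n\setminus A$, this forces $u^{\ve}=0$ there, which together with $u^{\ve}\in\K_1$ finishes the proof. The point is that the auxiliary function $v^{\ve}$ kills the $A^{\ve}$ part of the support at the level of each $k$, so one never needs a supersolution on the union.
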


\begin{proof}
	By Lemma \ref{teouA} with $s=1$, the inequality $u^{\ve}\leq u_A$ will follow if we prove that   $u^\ve\in H^1_0(\Omega)$, $u^{\ve}\leq 0$ in $\R^n \setminus A$ and  $-\Delta u^{\ve}\leq 1$ in  $\O$. 
	
	Observe that by Lemma \ref{Ksacotado} we have that $u,u^\ve \in H_0^1(\O)$.
	Let us define
	$$
	v^{\ve}:=1-\tfrac{1}{\ve} \min\{u_A, \ve\}=\tfrac{1}{\ve}(\ve - u_A)^+.
	$$
	and observe that $0\leq v^{\ve} \leq 1$ and  $v^{\ve}=0$ in $A^{\ve}$ 	since $0\leq \min \{u_A, \ve\} \leq \ve$ and $\frac{1}{\ve} \min \{ u_A , \ve \}=1$ in $A^{\ve}$. 	If we define
	$$
	u_{k,\ve}:=u_{A_k\cup A^{\ve}}^{s_k}, \qquad w_{k,\ve}:= \min \{ v^{\ve}, u_{k,\ve}  \},
	$$
	it holds that  $w_{k,\ve}\geq 0$ since the comparison principle gives $u_{k,\ve}\geq 0$, and also $v^{\ve}\geq 0$.
	
	Since $v^{\ve}=0$ in $A^{\ve}$, it follows that $w_{k,\ve}=0$ in $A^{\ve}$. Moreover, since $u_{k,\ve}=0$ in $\R^n\setminus (A_k \cup A^{\ve})$, it holds that $w_{k,\ve}=0$ in $\R^n \setminus (A_k \cup A^{\ve})$, and consequently, $w_{k,\ve} \in H_0^{s_k}(A_k)$. 
	
	Notice that $w_{k,\ve} \to w_\ve:=\min \{v^{\ve}, u^{\ve}\}$ strongly in $L^2(\O)$, and then, applying Lemma \ref{ELlemask}, we get $w_\ve \in H_0^1(\{u>0\})$, from where $w_\ve=0$ in $\{u=0\}$. The relation $0\leq u \leq u_A$ implies the  inclusion $\{u_A=0\} \subset \{u=0\}$, from where  $w_\ve \in H_0^1(\{u_A>0\})$. 	Moreover, since $\{u_A>0\} \subset A$, we have that $w_\ve=0$ in $\R^n \setminus A$. Now, being $v^{\ve}=1$ in $\R^n \setminus A$, we get $u^{\ve}=0$ in $\R^n \setminus A$, and in particular, $u^{\ve}\leq 0$ in $\R^n \setminus A$.
	
	Finally, it remains to see that $-\Delta u^{\ve}\leq 1$ in $\O$. Observe  that $u_{k,\ve} \in \K_{s_k}$ and $u_{k,\ve}\to u^{\ve}$ strongly in $L^2(\O)$. Then  $u^{\ve} \in \K_1$ by Lemma \ref{Ksacotado}. Thus  $-\Delta u^{\ve} \leq 1$ in $\O$ and the proof is complete.
\end{proof}

With the help of these lemmas, we are now in position to prove then main tool needed in the proof of Theorem \ref{main}.

\begin{prop} \label{suplente}
	Let $0<s_k\uparrow1, A_k \in \A_{s_k}^c(\O)$ be such that $u_{A_k}^{s_k}\to u$ strongly in $L^2(\O)$. Then, there exist $\tilde{A}_k \in \A_{s_k}(\O)$ such that $A_k \subset \tilde{A}_k$ and $\tilde{A}_k$ $\gamma-$converges to $A:=\{ u>0\}$.
\end{prop}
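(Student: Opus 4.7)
The plan is to take $\tilde A_k := A_k\cup A^{\varepsilon_k}$, where $A^\varepsilon := \{u_A^1>\varepsilon\}$ is the $\varepsilon$-superlevel set of the $1$-torsion function of $A := \{u>0\}$, and $\varepsilon_k\downarrow 0$ is chosen by a diagonal procedure. Since $u_A^1\in H^1_0(\Omega)$, each $A^\varepsilon$ belongs to $\A_1(\Omega)\subset\A_{s_k}(\Omega)$ by the remark following the definition of capacity, so $\tilde A_k\in\A_{s_k}(\Omega)$ and trivially $A_k\subset\tilde A_k$. Before invoking Lemma \ref{lemaepsilonsk} I first verify its hypothesis $u\leq u_A^1$: Lemma \ref{Ksacotado} places $u$ in $\K_1$, the nonnegative function $u$ vanishes outside $A=\{u>0\}$ by construction, and the maximality characterisation in Lemma \ref{teouA} (with $s=1$) then yields $u\leq u_A^1$.

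The heart of the argument will be a sandwich estimate for fixed $\varepsilon>0$. Lemma \ref{Ksacotado} makes the family $\{u_{A_k\cup A^\varepsilon}^{s_k}\}_k\subset\K_{s_k}$ precompact in $L^2(\Omega)$; let $u^\varepsilon$ denote an arbitrary $L^2$-cluster point. On the one hand, Lemma \ref{lemaepsilonsk} supplies the upper bound $u^\varepsilon\leq u_A^1$. On the other hand, the inclusion $A^\varepsilon\subset A_k\cup A^\varepsilon$ combined with domain monotonicity of the $s_k$-torsion gives $u_{A^\varepsilon}^{s_k}\leq u_{A_k\cup A^\varepsilon}^{s_k}$, while Lemma \ref{Afijo} applied to the fixed set $A^\varepsilon$ yields $u_{A^\varepsilon}^{s_k}\to u_{A^\varepsilon}^1$ in $L^2(\Omega)$; passing to the limit I obtain $u_{A^\varepsilon}^1\leq u^\varepsilon$. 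A direct comparison for $-\Delta$ shows that $(u_A^1-\varepsilon)^+\in H^1_0(A^\varepsilon)$ solves $-\Delta w=1$ in $A^\varepsilon$, hence $(u_A^1-\varepsilon)^+\leq u_{A^\varepsilon}^1$. Putting the three bounds together,
$$
(u_A^1-\varepsilon)^+\leq u^\varepsilon\leq u_A^1\quad\text{a.e.\ in }\Omega,
$$
so $\|u^\varepsilon-u_A^1\|_{L^2(\Omega)}\leq\varepsilon|\Omega|^{1/2}$.

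Since every $L^2$-cluster point of the precompact sequence $\{u_{A_k\cup A^\varepsilon}^{s_k}\}_k$ satisfies this estimate, precompactness upgrades it to $\limsup_k\|u_{A_k\cup A^\varepsilon}^{s_k}-u_A^1\|_{L^2(\Omega)}\leq\varepsilon|\Omega|^{1/2}$ for every $\varepsilon>0$. A standard diagonal extraction then produces $\varepsilon_k\downarrow 0$ with $u_{A_k\cup A^{\varepsilon_k}}^{s_k}\to u_A^1$ strongly in $L^2(\Omega)$; setting $\tilde A_k:=A_k\cup A^{\varepsilon_k}$ gives $\tilde A_k\stackrel{\gamma}{\to}A$, which is the desired conclusion. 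The delicate step will be the upper bound $u^\varepsilon\leq u_A^1$, which is precisely the content of Lemma \ref{lemaepsilonsk} and whose proof in turn hinges on the $\Gamma$-convergence machinery of Lemma \ref{ELlemask}; once that lemma is in hand the remaining pieces — comparison principle, domain monotonicity, the fixed-set convergence of Lemma \ref{Afijo} and the diagonal extraction — are routine.
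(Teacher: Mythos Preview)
Your proof is correct and follows essentially the same route as the paper: verify $u\le u_A^1$ via Lemmas \ref{Ksacotado} and \ref{teouA}, sandwich the cluster point $u^\varepsilon$ between $(u_A^1-\varepsilon)^+$ (via Lemma \ref{Afijo} and domain monotonicity) and $u_A^1$ (via Lemma \ref{lemaepsilonsk}), then diagonalize. Your treatment is in fact slightly more careful than the paper's in two places --- you make explicit that the sandwich holds for \emph{every} $L^2$-cluster point (so the full sequence, not just a subsequence, satisfies the quantitative bound $\|u_{A_k\cup A^\varepsilon}^{s_k}-u_A^1\|_{L^2}\le\varepsilon|\Omega|^{1/2}+o(1)$), and you note that $A^\varepsilon\in\A_1(\Omega)\subset\A_{s_k}(\Omega)$; the paper glosses over both points. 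One very minor remark: since $(u_A^1-\varepsilon)^+$ actually \emph{equals} $u_{A^\varepsilon}^1$ (by uniqueness, once you have shown it solves $-\Delta w=1$ in $A^\varepsilon$), your inequality $(u_A^1-\varepsilon)^+\le u_{A^\varepsilon}^1$ is in fact an equality --- but the inequality is all you need.
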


\begin{proof}
Since $u_{A_k}^{s_k} \in \K_{s_k}$ and $u_{A_k}^{s_k}\to u$, by Lemma \ref{Ksacotado}, $u\in \K_1$. Then, by Lemma \ref{teouA}, $u\le u_A$.
	
As in the previous proof, consider $A^\ve := \{ u_A > \ve \}$ and observe that
\begin{equation*} 
u_{A^\ve}^{s_k} \le u_{A_k\cup A^\ve}^{s_k}
\end{equation*}
	
Since $u_{A_k\cup A^\ve}^{s_k} \in \K_{s_k}$, by Lemma \ref{Ksacotado}, there exists $u^\ve\in H^1_0(\Omega)$ such that $u_{A_k\cup A^\ve}^{s_k} \to u^\ve$ strongly in $L^2(\O)$, up to a subsequence.
	
Also, by Lemma \ref{Afijo}, $u_{A^\ve}^{s_k} \to u_{A^\ve}$ strongly in $L^2(\O)$. Then, we can pass to the limit as $k\to \infty$ in the previous inequality to conclude that
$$
u_{A^\ve} \le u^{\ve}
$$
It can be easily checked that $u_{A^\ve}=(u_A-\ve)_+$.	Moreover,  from Lemma \ref{lemaepsilonsk},
$$
(u_A-\ve)_+ \leq u^{\ve} \leq u_A.
$$
Thus, there exists a sequence $0<\ve_k\downarrow 0$ such that
$$
u_{A_k\cup A^{\ve_k}}^{s_k} \to u_A \textrm{ strongly in } L^2(\O).
$$
That is, $A_k\cup A^{\ve_k}=:\tilde{A_k} \,  \gamma$-converges to $A$.
\end{proof}

Now we are ready to prove the main result.

\begin{proof}[Proof of Theorem \ref{main}]
By Theorem \ref{main.s}, there exists $A_k\in \A_{s_k}^c(\Omega)$ such that
$$
F_{s_k}(A_k) = \min\{ F_{s_k}(A)\colon A\in\A_{s_k}^c(\Omega)\}.
$$
Then, if $A\in \A_1^c(\Omega)$, by condition $(H_1)$ we know that
$$
\limsup_{k\to \infty} F_{s_k}(A_k)\le \lim_{k\to \infty} F_{s_k}(A) = F_1(A),
$$
from where it follows that
\begin{equation}\label{abajo}
\limsup_{k\to\infty} \min\{ F_{s_k}(A)\colon A\in\A_{s_k}^c(\Omega)\} \le  \min\{ F_1(A)\colon A\in\A_1^c(\Omega)\}.
\end{equation}

Let us see the reverse inequality. By simplicity, let us denote $u_k:=u_{A_{k}}^{s_k} \in \K_{s_k}$. 
		
By Lemma \ref{Ksacotado}, there is $u\in H^1_0(\Omega)$ such that, up to a subsequence, $u_k \to u$ strongly in $L^2(\O)$. 
	
Moreover, by Proposition \ref{suplente}, there exists $\tilde{A_k} \in \A_{s_k}(\O)$ such that $A_k \subset \tilde{A}_k$ and $\tilde{A}_k$  $\gamma-$converges to $A:=\{ u>0 \}$. Since $u_k \to u$ in $L^2(\O)$, $|A|\le c$.

Finally, from  condition $(H_2)$ and $(H_2^s)$ we   conclude that
$$
F_1(A)\leq \liminf_{k\to \infty} F_{s_k}(\tilde{A}_{k}) \leq \liminf_{k\to \infty} F_{s_k}({A}_{k}),
$$
from where it follows that
\begin{equation}\label{arriba}
\min\{ F_1(A)\colon A\in\A_1^c(\Omega)\}\le \liminf_{k\to\infty} \min\{ F_{s_k}(A)\colon A\in\A_{s_k}^c(\Omega)\}.
\end{equation}
Putting together \eqref{abajo} and \eqref{arriba} the result follows.
\end{proof}


\section*{Acknowledgements}
This paper was partially supported by grants UBACyT 20020130100283BA, CONICET PIP 11220150100032CO and ANPCyT PICT 2012-0153. 

J. Fern\'andez Bonder and Ariel M. Salort are members of CONICET and A. Ritorto is a doctoral fellow of CONICET.

\bibliographystyle{amsplain}
\bibliography{biblio}

\end{document}